\def\Bibtex{{\rm B\kern-.05em{\sc i\kern-.025em b}\kern-0.08em T\kern-.1667em\lower.7ex\hbox{E}\kern-.125emX}}
\numberwithin{equation}{section}
\newtheorem{theorem}{Theorem}[section]
\newtheorem{proposition}[theorem]{Proposition}
\newtheorem{corollary}[theorem]{Corollary}
\newtheorem{lemma}[theorem]{Lemma}
\theoremstyle{definition}
\newtheorem{definition}[theorem]{Definition}
\newtheorem{remark}[theorem]{Remark}
\newtheorem{example}[theorem]{Example}
\newcommand{\Vol}{{\rm Vol}}
\newcommand{\ddc}{dd^c}
\def\C^n{\mathbb C^n}
\numberwithin{equation}{section}
\newcommand{\capK}{\text{Cap}}
\newcommand{\PSH}{{\rm PSH}}
\newcommand{\N}{\mathbb{N}}
\begin{document}

\title{Singularities vs non-pluripolar Monge--Amp\`{e}re masses}
\author{Quang-Tuan Dang, Hoang-Son Do, Hoang Hiep Pham}

\address{The Abdus Salam International Centre for Theoretical Physics (ICTP), Str. Costiera, 11, 34151 Trieste, TS, Italy}
\email{qdang@ictp.it}
\address{Vietnam Academy of Science and Technology, Institute of Mathematics, 18 Hoang Quoc Viet Road, Cau Giay, Hanoi, Vietnam}
\email{dhson@math.ac.vn $\&$ phhiep@math.ac.vn}
% \address{Vietnam Academy of Science and Technology, Institute of Mathematics, 18 Hoang Quoc Viet Road, Cau Giay, Hanoi, Vietnam}
% \email{phhiep@math.ac.vn}
\date{\today}
\subjclass[2020]{32W20, 32U05, 32Q15}
\keywords{Complex Monge-Amp\`ere operators, Singularity type, Capacity}

%\renewcommand{\thefootnote}{}
%\footnote{2010 \emph{Mathematics Subject Classification}: 32C25, 32C30, 32U25, 32W20.}
%\vskip0.1cm

%\footnote{\emph{Key words and phrases}: a pure $k$-dimentional analytic subsets, plurisubharmonic functions on analytic subsets, Lelong number, the log canonical thresholds.}

%\renewcommand{\thefootnote}{\arabic{footnote}}
%\renewcommand{\thefootnote}{\arabic{footnote}}
%\setcounter{footnote}{0}

\begin{abstract}
 The aim of this paper is to compare singularities of closed positive currents whose non-pluripolar complex Monge--Ampère masses equal. We also provide a short alternative proof for the monotonicity of non-pluripolar complex Monge--Ampère masses, generalizing results of Witt-Nystr\"om, Darvas--Di Nezza--Lu, Lu--Nguy\^en and Vu. 
 %introduce and study the notion of the complex Monge-Amp\`{e}r measure of an arbitrary plurisubharmonic function.
\end{abstract}

\maketitle
% \tableofcontents
\section{Introduction}

One of the central topics of interest in pluripotential theory is the domain of the definition of the complex Monge--Amp\`ere operator. 
% Since the 1970s, Bedford and Taylor constructed the Monge--Amp\`{e}re operator on the class of locally bounded plurisubharmonic functions and investigated the existence of solutions to the Dirichlet problem on bounded strictly pseudoconvex domains (see \cite{bedford1976dirichlet}). Subsequently, in their seminal work (see \cite{bedford1982new}), they established profound properties of plurisubharmonic functions on open subsets of $\mathbb{C}^n$. The Monge–Ampère operator, as defined by Bedford and Taylor (\cite{bedford1976dirichlet}, \cite{bedford1982new}), applies to locally bounded plurisubharmonic functions. 
Building on the seminal work of Bedford and Taylor \cite{bedford1976dirichlet,bedford1982new}, the Monge--Amp\`{e}re operator is well-defined on the class of locally bounded plurisubharmonic functions.
This was later extended by Demailly~\cite{demailly1993monge} to plurisubharmonic functions with isolated or compactly supported singularities. Furthermore, Cegrell~\cite{cegrell1998pluricomplex,cegrell2004general} introduced the largest class $\mathcal{E}$ of plurisubharmonic functions, on which the Monge--Amp\`ere operator is well-defined and continuous along decreasing sequences.

% Non-pluripolar Monge--Ampère products of positive closed $(1,1)$-currents on a compact K\"{a}hler manifold was first introduced by Guedj--Zeriahi \cite{guedj2007weighted} and 

 Guedj and Zeriahi~\cite{guedj2007weighted} extended the Bedford–Taylor theory~\cite{bedford1987fine} and Cegrell theorey~\cite{cegrell1998pluricomplex} (originally developed in a local setting) to the geometric setting of compact K\"ahler manifolds, where they defined the so-called non-pluripolar Monge--Amp\`ere operator for unbounded potentials. Their approach was later adapted to the setting of big cohomology classes by Boucksom, Eyssidieux, Guedj, and Zeriahi \cite{boucksom2010monge}. In the latter paper, the authors posed a conjecture regarding the monotonicity of Monge--Ampère masses, which was later resolved by D. Witt-Nystr\"om  \cite{wittnystrom19-monotonicity}. Alternative proofs were subsequently provided in \cite{vu2021relative,lu2022hessian,lu2021capacities}.
In this paper, we generalize this result by showing that the complex Monge--Ampère mass of potentials decreases as its singularity type increases {\em in capacity}; cf. Definition~\ref{def: sing-capa}. 
%We observed that a version of this conjecture in the local setting was Theorem 4.1 in \cite{Ahag09-MA-pluripolar}. In this note, we will use ideas in the proof of Theorem 4.1 in \cite{Ahag09-MA-pluripolar} to give a direct proof of a following result about the monotonicity of non-pluripolar products which improve Theorem 1.2 in \cite{wittnystrom19-monotonicity} and Theorem 1.1 in \cite{darvas2023relative}.

To state our results, let $(X, \omega)$ be a compact K\"{a}hler manifold of dimension $n$ and let $\theta$ be a smooth closed real(1,1) form. 
% We say that $u\in L^1(X)$ is $\theta$-plurisubharmonic ($\theta$-psh)  if, locally, $u$ can be written as the sum of a plurisubharmonic and a smooth function, and $\theta_u:=\theta+\ddc u\geq 0$ in the sense of currents. 
We let $\PSH(X, \theta)$ denote the set of $\theta$-psh functions on $X$. Recall that
the cohomology class $\{\theta\}$ is {\em pseudoeffective} if $\PSH(X,\theta)\neq \varnothing$. 
 The cohomology class $\{\theta\}$ is is {\em big} if $\PSH(X,\theta-\varepsilon_0\omega)\neq \varnothing$ for some $\varepsilon_0>0$. 

 If $u$ and $v$ are two $\theta$-psh functions on X, then $u$ is said to be {\em less (resp. more) singular} than $v$ if $u  \geq\, v+O(1)$ (resp. $u  \leq\, v+O(1)$). 
 We say that $u$ has {\em the same singularities} as $v$ if $u$ is less singular than $v$, and $v$ is less singular than $u$. 
% This defines an equivalence relation on $\PSH(X,\theta)$, whose equivalence classes are the singularity type $[u]$.

% Let $T_j=\theta_j+\dc u_j$, for $j\in\{1,\ldots,p\}$, $1\leq p\leq n$, be closed positive currents, where $\theta_j$ are closed smooth real (1,1)-forms. Boucksom--Eyssidieux--Guedj--Zeriahi \cite{boucksom2010monge} defined the {\em non-pluripolar product} of these currents: 
% $$\theta_{1,u_1}\wedge\ldots\wedge \theta_{p,u_p}=\langle T_1\wedge\ldots \wedge T_p\rangle$$
% which is shown to be a well-defined closed positive $(p,p)$-current on $X$ putting no mass on pluripolar sets. 
For a $\theta$-psh function $u$, one defines its {\em non-pluripolar Monge--Amp\`ere measure} by $\theta_u^n:=\theta_u\wedge\ldots\wedge\theta_u$. Fix $\phi\in \PSH(X,\theta)$.
% We say that $u$ has {full mass relative} to $\phi$ if $u$ is more singular than $\phi$ and $\int_X\theta^n_u=\int_X\theta_\phi^n$. 
 % Let $\PSH(X,\theta,\phi)$ denote the set of $\theta$-psh functions that are more singular than $\phi$.
Let
$\mathcal{E}(X,\theta,\phi)$ denote the set of  $\theta$-ph functions $u$ with full mass relative to $\phi$, i.e., $u$ is more singular than $\phi$ and $\int_X\theta^n_u=\int_X\theta_\phi^n$. When $\phi=V_\theta$ is the "the least singular" element of $\PSH(X,\theta)$, we simply denote by $\mathcal{E}(X,\theta)$.

% Witt Nystr\"om showed in \cite{wittnystrom19-monotonicity} the dependence of the Monge--Ampère mass on the singularity type. Specifically, if $\varphi,\psi\in\PSH(X,\theta)$ have the same singularity type, i.e, $|\varphi-\psi|\leq O(1)$, then their Monge--Ampère masses are equal: $\int_X\theta_\varphi^n=\int_X\theta_\psi^n$. 
%  A natural question arises: conversely, if $\int_X\theta_\varphi^n=\int_X\theta_\psi^n$, what can one say about the relationship between the singularity types of $\varphi$ and $\psi$? Our main result is the following.

Extending the result of Guedj--Zeriahi~\cite{guedj2007weighted} and Dinew~\cite{dinew2009uniqueness} for the K\"ahler case, Darvas--Di Nezza--Lu~\cite{darvas2018monotonicity,darvas2021log} 
 showed that the non-pluripolar Monge--Ampère measure uniquely determines the potential within a relative full mass class $\mathcal{E}(X,\theta,\phi)$. 
% gave a complete characterization of the range of the complex Monge--Ampère operator on the relative full mass class $\mathcal{E}(X,\theta,\phi)$.
They also provided a characterization of membership in the class $\mathcal{E}(X,\theta,\phi)$, solely in terms of singularity type. Our first theorem below gives another characterization.
\begin{theorem}\label{thm: thm1}
	%Let $\varphi,\psi\in\PSH(X,\theta)$ with positive masses. If $\int_X\theta_\varphi^n=\int_X\theta_\psi^n>0$, 
Let $(X, \omega)$ be a compact K\"{a}hler manifold and $\theta$ a smooth closed real (1, 1)-form on $X$ whose cohomology class is big.
Let $\phi\in\PSH(X,\theta)$. If $\varphi,\psi\in\mathcal{E}(X,\theta,\phi)$, then for each $c>0$, there exists a function $ h\in\mathcal{E}(X,\omega)$ such that $$|\varphi-\psi|\leq -ch.$$
Conversely, if $\varphi,\psi\in\PSH(X,\theta)$ such that $|\varphi-\psi|\leq -ch$ for some $c>0$ and $h\in\mathcal{E}(X,\omega)$, then both $\varphi$ and $\psi$ belong to $\mathcal{E}(X,\theta,\max(\varphi,\psi))$.

In particular, $\varphi\in\mathcal{E}(X,\theta,\phi)$ if and only if $\varphi$ is more singular than $\phi$ and $\varphi\geq \phi+ch $ for some $c>0$ and $h\in\mathcal{E}(X,\omega)$.
% $$\int_X\theta_\varphi^n=\int_X\theta_\psi^n.$$
% Furthermore, if $\theta_\varphi^n=\theta_\psi^n$ then $u-v$ is constant. 
\end{theorem}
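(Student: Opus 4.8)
\emph{Plan of proof.} The three statements are linked by the pointwise identity $|\varphi-\psi|=(w-\varphi)+(w-\psi)$, where $w:=\max(\varphi,\psi)\in\PSH(X,\theta)$. After adding constants (which changes neither singularity types, nor membership in the relevant classes, nor the conclusions up to harmless constants) we may assume $\varphi\le\phi$ and $\psi\le\phi$, so that $w\le\phi$; the final ``in particular'' assertion is then the special case $\psi=\phi$, for which $w=\phi$ and $\varphi\ge\phi+ch\iff|\varphi-\phi|\le-ch$. For Part~1, monotonicity of non-pluripolar masses (\cite{wittnystrom19-monotonicity}, see also \cite{boucksom2010monge,darvas2018monotonicity}) applied to $\varphi\le w\le\phi$, together with $\int_X\theta_\varphi^n=\int_X\theta_\psi^n=\int_X\theta_\phi^n$, forces all intermediate masses to coincide, so that $w\in\mathcal{E}(X,\theta,\phi)$ and $\varphi,\psi\in\mathcal{E}(X,\theta,w)$. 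Thus everything reduces to a single \emph{core} comparison for a pair $u\le v$ in $\PSH(X,\theta)$: (A) if $u\in\mathcal{E}(X,\theta,v)$ then for each $c>0$ there is $g\in\mathcal{E}(X,\omega)$ with $v-u\le-cg$; and (B) if $v-u\le-cg$ for some $c>0$ and $g\in\mathcal{E}(X,\omega)$, then $u\in\mathcal{E}(X,\theta,v)$. Part~2 is then (B) applied to $u=\varphi$ and $u=\psi$ against $v=w$ (note $w-\varphi\le|\varphi-\psi|\le-ch$), and Part~1 is (A) applied to $(\varphi,w)$ and $(\psi,w)$ followed by averaging.

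For the converse (B) I would argue by cutting off. Set $u_t:=\max(u,v-t)\searrow u$; since $v-t\le u_t\le v$ the functions $u_t$ and $v$ share the same singularity type, so $\int_X\theta_{u_t}^n=\int_X\theta_v^n$ \cite{boucksom2010monge}. By plurifine locality $u_t=u$ on the open set $\{u>v-t\}$, whence $\int_{\{u>v-t\}}\theta_u^n\le\int_X\theta_v^n$; letting $t\to\infty$ and using that $\theta_u^n$ puts no mass on $\{u=-\infty\}$ gives both $\int_X\theta_u^n\le\int_X\theta_v^n$ and
\[ D:=\int_X\theta_v^n-\int_X\theta_u^n=\lim_{t\to\infty}\int_{\{u\le v-t\}}\theta_{u_t}^n\ge0. \]
It remains to see $D=0$. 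Here $v-u\le-cg$ gives $\{u\le v-t\}\subseteq\{g\le-t/c\}$ and $u_t\ge v+c\max(g,-t/c)$; comparing the two models $u_t$ and $v+c\max(g,-t/c)$ through the comparison principle in the big class (\cite{boucksom2010monge}; bigness of $\{\theta\}$ furnishes a K\"ahler current $\theta+dd^c\beta\ge\varepsilon_0\omega$ that lets one pass between the $\theta$- and $\omega$-Monge--Amp\`ere masses) bounds the lost mass $\int_{\{u\le v-t\}}\theta_{u_t}^n$ by a multiple of $\int_{\{g\le-t/c\}}\omega_{\max(g,-t/c)}^n$, which tends to $0$ precisely because $g\in\mathcal{E}(X,\omega)$, by the Guedj--Zeriahi characterisation of the full-mass class \cite{guedj2007weighted}. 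The one delicate point is the contact set $\{u=v-t\}$, where naive plurifine splitting fails and the comparison principle must be invoked.

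The forward construction (A) is the heart of the matter. From $u\in\mathcal{E}(X,\theta,v)$ one first extracts quantitative decay of the Monge--Amp\`ere capacities $a_k:=\capK_\omega(\{u<v-k\})$: full relative mass forces $a_k\to0$, and after passing to a sub-weight one may arrange the $a_k$ to decay fast enough to be summable against the singularity. One then produces $g$ as the envelope $g:=\big(\sup\{\chi\in\PSH(X,\omega):\chi\le-(v-u)/c\}\big)^{*}$, which by construction satisfies $v-u\le-cg$; equivalently one builds $g=\sum_k\lambda_k h_k$ from the relative extremal functions $h_k$ of the sets $\{u<v-k\}$, with weights $\lambda_k$ calibrated to $c$ and to $(a_k)$ so that $-g\gtrsim(v-u)/c$ while the series remains $\omega$-psh. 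The main obstacle is exactly to show that this $g$ is not $\equiv-\infty$ and lies in $\mathcal{E}(X,\omega)$: one must convert the capacity decay coming from the full-mass hypothesis into a potential that dominates $(v-u)/c$ and yet loses no $\omega$-mass itself. This I would carry out through the Alexander--Taylor/Guedj--Zeriahi estimates relating $\capK_\omega$, the sup-norms $\|h_k\|$, and the full-mass condition \cite{guedj2007weighted}.

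Finally I assemble. For Part~1, applying (A) with parameter $c/2$ to $(\varphi,w)$ and to $(\psi,w)$ yields $g_1,g_2\in\mathcal{E}(X,\omega)$ with $w-\varphi\le-\tfrac c2 g_1$ and $w-\psi\le-\tfrac c2 g_2$; since $\mathcal{E}(X,\omega)$ is convex, $h:=\tfrac12(g_1+g_2)\in\mathcal{E}(X,\omega)$ and $|\varphi-\psi|=(w-\varphi)+(w-\psi)\le-ch$, as required. For Part~2, (B) gives $\varphi,\psi\in\mathcal{E}(X,\theta,w)=\mathcal{E}(X,\theta,\max(\varphi,\psi))$. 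The ``in particular'' statement is the case $\psi=\phi$: the forward implication is Part~1 and the reverse implication is Part~2.
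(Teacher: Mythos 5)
Your overall architecture is the same as the paper's: the forward direction comes from an envelope $P_\omega\bigl((u-v)/c\bigr)$ with full $\omega$-mass, and the converse comes from a capacity-sensitive monotonicity statement for non-pluripolar masses. But both of the load-bearing steps are left as plans rather than proofs, and in each case the plan as stated does not obviously close. For step (A) you correctly identify the right object, $g=P_\omega\bigl(-(v-u)/c\bigr)$, and you correctly identify that the whole difficulty is to show $g\in\mathcal{E}(X,\omega)$ --- but you then only gesture at Alexander--Taylor type estimates or at a series $\sum_k\lambda_k h_k$ of relative extremal functions with ``calibrated'' weights. It is not clear that capacity decay of $\{u<v-k\}$ alone can be summed into an $\omega$-psh function of full mass dominating $(u-v)/c$; this is precisely the content of Proposition~\ref{prop: homega} (from \cite{dang2021continuity}), whose actual proof does not go through capacities at all but through the orthogonality relation of Theorem~\ref{thm: envelope} (the measure $\omega_{P_\omega(f)}^n$ charges only the contact set $\{P_\omega(f)=f\}$), plurifine locality, and the monotonicity theorem. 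Without that ingredient your Part~1 is not established.

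For step (B) you need the inequality $\int_X\theta_u^n\ge\int_X\theta_v^n$ under $u\ge v+cg$ with $g\in\mathcal{E}(X,\omega)$ unbounded; this is \emph{not} covered by Witt-Nystr\"om's theorem (the singularity types of $u$ and $v$ differ by the unbounded quantity $cg$), so you genuinely must prove the capacity-relative version, i.e.\ Theorem~\ref{mainthm}. Your one-line claim that the comparison principle ``bounds the lost mass $\int_{\{u\le v-t\}}\theta_{u_t}^n$ by a multiple of $\int_{\{g\le -t/c\}}\omega_{\max(g,-t/c)}^n$'' is exactly where the work lies: on $\{u<v-t\}$ one has $\theta_{u_t}^n=\theta_v^n$, and there is no direct comparison between $\theta_v^n$ and $\omega_g^n$. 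The paper's proof of Theorem~\ref{thm: sub-main} gets this under control only through the truncations $\varphi_k=\max(\varphi,-k-1)$, the $\varepsilon$-perturbation $u_k=(1-2\varepsilon)\varphi_k-2\varepsilon k$ (which also disposes of the contact-set issue you flag), the rescaling $\frac{\varepsilon(k-1)}{4c(k+1)}\varphi_k$ that makes the comparison principle in $\mathcal{E}(X,\omega)$ applicable, and the normalization $\theta\le\omega$. Note also that your appeal to bigness here points the wrong way: to dominate $\theta$-masses by $\omega$-masses one needs $\theta\le C\omega$ (harmless), not a K\"ahler current $\theta+dd^c\beta\ge\varepsilon_0\omega$. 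Once Proposition~\ref{prop: homega} and Theorem~\ref{mainthm} are granted, your assembly (the identity $|\varphi-\psi|=(w-\varphi)+(w-\psi)$ with $w=\max(\varphi,\psi)$, convexity of $\mathcal{E}(X,\omega)$, and the specialization $\psi=\phi$) is correct and matches the paper.
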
  
% The last statement shows that the non-pluripolar Monge--Ampère measure uniquely determines the potential within a relative full mass class. This also establishes the uniqueness result of Dinew~\cite{dinew2009uniqueness} for the K\"ahler case, in which his method can be adapted to more singular settings ~\cite{boucksom2010monge,darvas2018monotonicity}. 
% We are, moreover, able to avoid the mass concentration technique, which 
The first statement of this theorem is a direct consequence of a result of the first author~\cite{dang2021continuity} (see Proposition~\ref{prop: homega}), whose proof relies on the envelope technique, developed in the series of recent works~\cite{darvas2018singularity,darvas2018monotonicity,darvas2020metric}. 
% the plurifine locality property~\cite{bedford1987fine} and the monotonicity of non-pluripolar Monge--Ampère masses proved by Witt-Nystr\"om \cite{wittnystrom19-monotonicity}; see also \cite{vu2021relative,lu2022hessian,lu2021capacities} for different proofs. 
For the second statement,
 inspired by the idea in \cite{Ahag09-MA-pluripolar}, we establish a slight generalization of the monotonicity of non-pluripolar Monge--Ampère masses, as shown in \cite{wittnystrom19-monotonicity} (see also \cite{vu2021relative,lu2022hessian,lu2021capacities} for different proofs), thereby providing an alternative proof of Witt-Nystr\"om's result.
\begin{theorem}\label{mainthm}
Let $(X, \omega)$ be a compact K\"{a}hler manifold of dimension $n$. Let $\theta_j$, $j\in\{1,\ldots,n\}$, be smooth closed real $(1,1)$-forms on $X$ whose cohomology classes are pseudoeffective. Assume $\varphi_j,\psi_j \in \PSH (X,\theta_j )$. If there exist $h \in \mathcal E (X, \omega )$ and $c\geq 0$ such that 
$$\lim\limits_{ k\to+\infty }k^n \capK_{\omega} ( \{ \varphi_j \ < \psi_j+ c h - k \} ) = 0,$$
for all $1\leq j\leq n$, then 
$$\int_X  \theta_{ 1, \varphi_1 }\wedge ...\wedge \theta_{ n, \varphi_n }  \geq \int_X  \theta_{ 1, \psi_1 }\wedge ...\wedge \theta_{ n, \psi_n }  .$$
\end{theorem}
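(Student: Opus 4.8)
The plan is to establish the inequality one factor at a time and to reduce each single--factor comparison to bounded potentials, where an integration--by--parts identity is available, paying for the singularities with the capacity hypothesis. First I would set up a telescoping: for $0\le m\le n$ put
$$I_m:=\int_X\theta_{1,\varphi_1}\wedge\cdots\wedge\theta_{m,\varphi_m}\wedge\theta_{m+1,\psi_{m+1}}\wedge\cdots\wedge\theta_{n,\psi_n},$$
so that $I_0$ is the $\psi$--mass and $I_n$ the $\varphi$--mass in the statement. It then suffices to prove $I_m\ge I_{m-1}$ for each $m$, an inequality in which only the $m$-th slot changes while the remaining factors assemble into a fixed positive non-pluripolar product $T$. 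The point of this reduction is that the capacity hypothesis is indexed by $j$, so each step consumes exactly the assumption on $\varphi_m$ against $\psi_m$, and the several hypotheses are never needed simultaneously.

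For the single--factor step I would truncate: writing $\varphi_m^k:=\max(\varphi_m,V_{\theta_m}-k)$ and $\psi_m^k:=\max(\psi_m,V_{\theta_m}-k)$, and truncating likewise the potentials occurring in $T$ to produce $T^k$, all potentials become of minimal singularities, so their mixed masses are cohomological invariants and the standard integration--by--parts formula for non-pluripolar products applies. Since $\varphi_m^k-\psi_m^k$ is bounded, this yields the identity
$$\int_X\theta_{m,\varphi_m^k}\wedge T^k=\int_X\theta_{m,\psi_m^k}\wedge T^k.$$
On the other hand, by the very definition of the non-pluripolar product together with its plurifine locality, $I_m$ and $I_{m-1}$ are the limits as $k\to+\infty$ of these two equal masses restricted to the plurifine ``good set'' on which none of the potentials has reached its truncation level. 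Hence the whole difference $I_m-I_{m-1}$ is, in the limit, carried by the regions where some potential falls below its cutoff, and the decisive such region is $\{\varphi_m<\psi_m+ch-k\}$, on which $\varphi_m$ is genuinely more truncated than $\psi_m$.

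The crucial input is then a Chern--Levine--Nirenberg estimate: for the level--$k$ truncations and a Borel set $E$, the mixed mass $\int_E\theta_{m,\varphi_m^k}\wedge T^k$ is bounded by $Ck^{\,n}\capK_\omega(E)$, the weight $k^{\,n}$ reflecting that the $n$ potentials oscillate by $O(k)$ across their cutoffs. Taking $E=\{\varphi_m<\psi_m+ch-k\}$ and invoking the hypothesis $\lim_k k^{\,n}\capK_\omega(\{\varphi_m<\psi_m+ch-k\})=0$ makes this discrepancy an $o(1)$, whence $I_m\ge I_{m-1}$ after letting $k\to+\infty$; summing over $m$ gives the theorem. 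I expect the main obstacle to be precisely this comparison of ``lost masses'' at each step: one must control mixed products whose factors are truncated at different effective levels, keep track of the buffer $ch$ --- which momentarily pushes the $m$-th potential into the larger class $\theta_m+c\omega$, since $\psi_m+ch\in\PSH(X,\theta_m+c\omega)$ --- and combine plurifine locality with the \emph{sharp} $k^{\,n}$--weighted capacity bound so that the calibration $k^{\,n}\capK_\omega\to0$ exactly annihilates the error. Checking that the truncated factors of $T^k$ converge to those of $T$ without contributing extra loss, using the remaining per--index hypotheses, is the accompanying technical point.
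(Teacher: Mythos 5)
Your outline captures the right first-order mechanism (truncate to minimal singularities, use that such mixed masses are cohomological, and charge the lost mass to the set $\{\varphi_m<\psi_m+ch-k\}$ via a $k^n$-weighted capacity bound), but as written it has two genuine gaps, and it is worth noting that the paper's proof is organized to avoid precisely the first of them.

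The first gap is in the telescoping step. Writing $I_m-I_{m-1}$ as a difference of ``bad-set losses'' from the common cohomological mass $\int_X\theta_{m,\varphi_m^k}\wedge T^k=\int_X\theta_{m,\psi_m^k}\wedge T^k$, the region where mass is lost is not only $\{\varphi_m\le V_{\theta_m}-k\}$ (resp. $\{\psi_m\le V_{\theta_m}-k\}$) but also the union $B_k$ of the truncation sets of the \emph{common} factors $\varphi_1,\dots,\varphi_{m-1},\psi_{m+1},\dots,\psi_n$ appearing in $T^k$. On $B_k$ you must compare $\int_{B_k}\theta_{m,\varphi_m^k}\wedge T^k$ with $\int_{B_k}\theta_{m,\psi_m^k}\wedge T^k$ --- two different measures on the same set, with no hypothesis relating them --- and this loss does \emph{not} tend to zero in general (the common factors may have positive Lelong numbers). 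Making this rigorous is essentially what forces Vu's theory of relative non-pluripolar products. The paper sidesteps the issue entirely: it proves the \emph{diagonal} statement $\int_X\theta_\varphi^n=\int_X\theta_\psi^n$ for a single pair of potentials (Theorem~\ref{thm: sub-main}), applies it to $\sum_j t_j\varphi_j$ versus $\sum_j t_j\psi_j$, and recovers the mixed masses by identifying coefficients of homogeneous polynomials in $t_1,\dots,t_n$ via multilinearity; the one-sided hypothesis is then handled by comparing $\varphi_j$ with $\max(\varphi_j-\ell,\psi_j)$ and letting $\ell\to\infty$ using semicontinuity of masses along decreasing sequences.

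The second gap is the step you defer as ``keeping track of the buffer $ch$'', which is in fact the technical heart of the argument. The truncation set $\{\varphi_m\le V_{\theta_m}-k\}$ is \emph{not} contained in the capacity-controlled set $\{\varphi_m<\psi_m+ch-k'\}$; one must split off the regions $\{\psi_m\le -\varepsilon k\}$ (handled because $\theta_\psi^n$ does not charge $\{\psi=-\infty\}$) and $\{h\le -\varepsilon k/(2c)\}$. On the latter set the needed bound $\int_{\{h\le-\varepsilon k/(2c)\}}\theta_{\varphi_m^k}\wedge T^k=o(1)$ does not follow from your Chern--Levine--Nirenberg estimate alone: the paper obtains it by rescaling $\varphi_k$ by a factor $\sim\varepsilon/(4c)$ and invoking the comparison principle in $\mathcal E(X,\omega)$ to dominate this integral by $C(c/\varepsilon)^n\int_{\{h<-\varepsilon k/(4c)\}}\omega_h^n\to 0$. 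Without this (or an equivalent capacity characterization of $\mathcal E(X,\omega)$), the error term is not controlled. A smaller but real point: your CLN bound $\int_E\theta_{m,\varphi_m^k}\wedge T^k\le Ck^n\capK_\omega(E)$ requires the truncated potentials to be uniformly bounded, which fails for merely big classes where $V_{\theta_m}$ is unbounded; the paper first reduces to semi-positive $\theta$ and recovers the big (and then pseudoeffective) case by the same polynomial trick applied to $\theta+t\omega$.
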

Here, the notation of Monge--Ampère capacity $\capK_\omega$ was introduced by S. Ko\l odziej \cite{kolodziej2003monge}: for any Borel set $E\subset X$, one defines
$$\capK_{ \omega }(E):=\sup\Big\{\int_X\omega_u^n: u\in\PSH(X,\omega),\, 0\leq u\leq 1 \Big\}.$$

%The class $\mathcal E (X, \omega )$ of $\omega$-plurisubharmonic functions with finite weighted Monge-Am\`{e}pere energy was introduced and studied by Vincent Guedj and Ahmed Zeriahi \cite{guedj2007weighted}.

% The above main theorem directly implies a comparison principle as follows:
% \begin{corollary}
% Let $(X,\omega)$ be a compact K\"{a}hler manifold of dimension $n$. Let $\theta$ be a smooth closed real (1, 1)-form on $X$ whose cohomology class is pseudoeffective. Let $\varphi , \psi \in \PSH (X,\theta )$. If there exist $h\in \mathcal E (X, \omega )$ and $c\geq 0$ such that 
% $$\lim_{ k\to+\infty } k^n \capK_{ \omega } ( \{ \varphi < \psi + c h - k \} ) = 0$$
% then 
% $$\int_{\{ \varphi < \psi\} }  \theta_{ \psi }^n  \leq \int_{ \{ \varphi < \psi \}}  \theta_{ \varphi }^n  .$$
% \end{corollary}

\subsection*{Acknowledgement} We are grateful to Hoang-Chinh Lu for his  comments on
a preliminary draft.  
This work was done while the authors were visiting Vietnam Institute for Advanced Study in Mathematics (VIASM), and we would like to thank VIASM for its hospitality.

\subsection*{Ethics declarations} The authors declare no conflict of interest.
\section{Preliminaries}
In the whole article, we let $X$ denote a compact K\"ahler manifold of dimension $n$, equipped with a K\"ahler form $\omega$, and $\theta$ a smooth closed real $(1,1)$-form on $X$. Without loss of generality, we may assume that $\theta\leq \omega$.
\subsection{Non-pluripolar products}
Recall that a function $u:X\to\mathbb{R}\cup\{-\infty\}$ is quasi-plurisubharmonic
(quasi-psh) if, locally, $u$ can be written as the sum of a plurisubharmonic function and a smooth function. 
We say that a function $u$ is $\theta$-plurisubharmonic ($\theta$-psh) if it is quasi-psh and $\theta_u:=\theta+\ddc u\geq 0$ in the sense of currents.  We let $\PSH(X, \theta)$ denote the set of $\theta$-psh functions on $X$. Recall that
the cohomology class $\{\theta\}$ is {\em pseudoeffective} if $\PSH(X,\theta)\neq \varnothing$. 
 The cohomology class $\{\theta\}$ is is {\em big} if $\PSH(X,\theta-\varepsilon_0\omega)\neq \varnothing$ for some $\varepsilon_0>0$.

Let $x_0\in X$. Fixing a holomorphic chart $x_0\in V_{x_0}\subset X$,  the {\em Lelong number} $\nu(\varphi,x_0)$ of a quasi-psh function $\varphi$ at  $x_0\in X$ is defined as follows:
		\begin{align*}
		\nu(\varphi,x_0):=\sup\{\gamma\geq 0: \varphi(z)\leq \gamma\log\|z-x_0\|+O(1), \; \text{on}\; V_{x_0}\}.  
		\end{align*}
 Remark that this definition does not depend on the choice of local holomorphic charts.

   % For $\varphi\in\PSH(X,\theta)$, one defines {\em the multiplier ideal sheaf} $\mathcal{I}(t\varphi,x_0)$, $t\geq 0$, to be the sheaf of germs of holomorphic functions $f$ such that $|f|^2e^{-t\varphi}$ is locally integrable with respect to the Lebesgue measure in some local coordinates near $x_0$.

\smallskip

Assume $u,v\in \PSH(X,\theta)$. We say that $u$ is {\it less (resp. more) singular} than $v$, and denote by $u\succeq v$ (resp. $u \preceq v$), if there exists a constant $C$ such that $u+C\geq v$ (resp. $u\leq v+C$) on $X$. We say that $u$, $v$ have the {\em same singularity type}, and denote by $u\simeq v$ if $u \preceq v$ and $u\succeq v$.
A $\theta$-psh function $u$ is said to have {\em minimal singularities} if it is less singular than any other $\theta$-psh function.
We introduce the extremal function $V_\theta$ defined by
\[ V_\theta(x):=\sup\{\varphi(x)\in\PSH(X,\theta): \varphi\leq 0 \}.\]
One can see that $V_\theta$ is a $\theta$-psh function with minimal singularities. 
In particular, if $\theta$ is semi-positive, then $V_\theta=0$.

Let $\theta_1,\ldots,\theta_p$ be smooth closed real (1,1)-forms for $1\leq p\le n$. 
Let $u_j$ be $\theta_j$-psh functions for $j\in\{1,\ldots,p\}$ and put $\theta_{j,u_j}:=\theta_j +\ddc u_j$. We recall how to define the non-pluripolar product $\theta_{u_1} \wedge \cdots \wedge \theta_{u_n}$. We write locally $\theta_{j,u_j}= \ddc v_j$, where $v_j$ is psh. By \cite{bedford1987fine, boucksom2010monge}, one knows that the sequence of positive currents 
$$\mathbf{1}_{\cap_{j=1}^p \{v_j>-k\}}\ddc \max\{v_1, -k\} \wedge \cdots \wedge \ddc \max\{v_p, -k\}$$ 
is increasing in $k \in \N$ and converges to a closed positive current, which is independent of the choice of local potentials $v_j$'s. Thus, we obtain a well-defined closed positive current on $X$ which is called the \emph{non-pluripolar product} $\theta_{1,u_1} \wedge \cdots \wedge \theta_{p,u_p}$ of $\theta_{1,u_1}, \ldots, \theta_{p,u_p}$.  
%If $p=n$, the resulting positive $(n, n)$-current is a Borel measure putting no mass on pluripolar sets. 
For any $u\in\PSH(X,\theta)$, the {\em non-pluripolar complex Monge--Amp\`ere measure} of $u$ is given by \[\theta_u^n:= (\theta+\ddc u)^n.\]
Given a potential $\phi\in\PSH(X,\theta)$, we let $\PSH(X,\theta,\phi)$ denote the set of $\theta$-psh functions $u$ such that $u\leq\phi$. We denote by $\mathcal{E}(X,\theta,\phi)$ the set of functions $u\in\PSH(X,\theta,\phi)$ with {\em full Monge--Amp\`ere mass relative} to $\phi$, i.e.,
$\int_X\theta_u^n=\int_X\theta_\phi^n$.
If $\phi=V_\theta$ we simply denote by $\mathcal{E}(X,\theta)$. The volume of $\theta$ is defined by $\Vol(\theta):=\int_X\theta_{V_\theta}^n.$
We remark that by~\cite[Theorem 4.7]{boucksom2002volume}, the class $\{\theta\}$ is big if and only if $\Vol(\theta)>0$

% For every Borel set $E$ in $X$, recall that the capacity of $E$ is given by 
% $$\capK_{ \omega }(E):=\sup\Big\{\int_X\omega_u^n: u\in\PSH(X,\omega),\, 0\leq u\leq 1 \Big\}.$$

	We recall here the plurifine locality of the non-pluripolar Monge--Amp\`ere product (see~\cite[Corollary 4.3]{bedford1987fine} or \cite[Section 1.2]{boucksom2010monge}) for later use. This topology is the coarsest such that all quasi-psh functions with values in $\mathbb{R}$ are continuous.
\begin{lemma}\label{lem: plurifine}
	Assume that $\varphi$, $\psi$ are $\theta$-psh functions such that $\varphi=\psi$ on an open set $U$ in the plurifine topology. Then 
	\begin{equation*}
	\mathbf{1}_U\theta_\varphi^n=\mathbf{1}_U\theta_\psi^n.
	\end{equation*}
\end{lemma}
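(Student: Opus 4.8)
The plan is to reduce the statement to the classical plurifine locality of the Bedford--Taylor product of bounded potentials and then pass to the increasing limit that defines the non-pluripolar product. Since both sides are measures and the non-pluripolar product is constructed locally, I would first fix a finite covering of $X$ by holomorphic charts $V_\alpha$ on each of which $\theta=\ddc\rho_\alpha$ for a smooth local potential $\rho_\alpha$, and it suffices to prove the identity after multiplying by $\mathbf 1_{V_\alpha}$. On such a chart set $v:=\varphi+\rho_\alpha$ and $v':=\psi+\rho_\alpha$; these are genuine psh functions with $\theta_\varphi=\ddc v$ and $\theta_\psi=\ddc v'$ locally, and the hypothesis $\varphi=\psi$ on $U$ translates, since $\rho_\alpha$ is a fixed finite smooth function, into $v=v'$ on the plurifine-open set $U\cap V_\alpha$.

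Next I would use the local description of the non-pluripolar product recalled in the excerpt: writing $\mu_k:=\mathbf 1_{\{v>-k\}}(\ddc\max(v,-k))^n$ and $\mu_k':=\mathbf 1_{\{v'>-k\}}(\ddc\max(v',-k))^n$, these are increasing sequences of positive measures with $\mu_k\nearrow\theta_\varphi^n$ and $\mu_k'\nearrow\theta_\psi^n$ on the chart. The key step is to show $\mathbf 1_U\mu_k=\mathbf 1_U\mu_k'$ for every fixed $k$. Here the truncations $\max(v,-k)$ and $\max(v',-k)$ are bounded psh functions that coincide on $U$ (because $v=v'$ there), so the plurifine locality of the Bedford--Taylor product for bounded potentials (\cite[Corollary 4.3]{bedford1987fine}) yields
$$\mathbf 1_{U}(\ddc\max(v,-k))^n=\mathbf 1_{U}(\ddc\max(v',-k))^n.$$
Moreover $v=v'$ on $U$ forces $U\cap\{v>-k\}=U\cap\{v'>-k\}$, hence the products $\mathbf 1_U\mathbf 1_{\{v>-k\}}$ and $\mathbf 1_U\mathbf 1_{\{v'>-k\}}$ agree; combining the two facts gives $\mathbf 1_U\mu_k=\mathbf 1_U\mu_k'$.

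Finally I would pass to the limit. Since $\mu_k\nearrow\theta_\varphi^n$ and $\mu_k'\nearrow\theta_\psi^n$ as increasing sequences of positive measures, multiplication by the fixed Borel function $\mathbf 1_U$ commutes with the monotone limit (for any Borel set $A$ one has $(\mathbf 1_U\mu_k)(A)=\mu_k(U\cap A)\nearrow\theta_\varphi^n(U\cap A)$), so $\mathbf 1_U\theta_\varphi^n=\lim_k\mathbf 1_U\mu_k=\lim_k\mathbf 1_U\mu_k'=\mathbf 1_U\theta_\psi^n$ on each chart $V_\alpha$; patching over the finite covering gives the global identity. The only substantive input is the bounded plurifine locality of the Bedford--Taylor product, which I expect to be the main obstacle if one insists on a self-contained argument: it rests on the generation of the plurifine topology by superlevel sets $\{w>c\}$ of psh functions, the quasi-Lindel\"of property, the continuity of the mixed Monge--Amp\`ere product along decreasing sequences of bounded psh functions, and the vanishing of Bedford--Taylor products on pluripolar sets. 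Invoking \cite[Corollary 4.3]{bedford1987fine} (see also \cite[Section 1.2]{boucksom2010monge}) removes this difficulty, and everything else is bookkeeping with truncations and monotone convergence.
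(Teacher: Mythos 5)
Your proof is correct and follows essentially the same route as the paper, which states the lemma without proof and refers to \cite[Corollary 4.3]{bedford1987fine} and \cite[Section 1.2]{boucksom2010monge}: your reduction to local potentials, truncation at level $-k$, application of the bounded Bedford--Taylor plurifine locality, and passage to the increasing limit defining the non-pluripolar product is precisely the standard deduction carried out in those references. The only step stated without justification --- that setwise convergence $\mu_k(A)\nearrow\theta_\varphi^n(A)$ holds for every Borel $A$ when an increasing sequence of positive measures converges weakly --- is a standard fact about increasing limits of Radon measures, so there is no gap.
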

For practice, we stress that sets of the form
$\{u < v\}$, where $u$ and $v$ are quasi-psh functions, are open in the plurifine topology. Lemma \ref{lem: plurifine} will be referred to as the {\em plurifine locality property}.

% We recall the following classical inequality.
\begin{lemma}\label{lem: maxprin}
	Let $\varphi,\psi\in\PSH(X,\theta)$. Then
	\[\theta_{\max(\varphi,\psi)}^n\geq \mathbf{1}_{\{\psi\leq\varphi \}}\theta_\varphi^n+\mathbf{1}_{\{\varphi<\psi\}}\theta_{\psi}^n. \]
    In particular, if $\varphi\leq\psi$ then $\mathbf{1}_{\{\varphi=\psi\}}\theta^n_\varphi\leq \mathbf{1}_{\{\varphi=\psi\}}\theta^n_\psi$.
\end{lemma}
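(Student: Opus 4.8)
The plan is to reduce the global statement to the classical Bedford--Taylor maximum principle for bounded plurisubharmonic functions and then to remove the boundedness hypothesis by the canonical cut-off construction used to define the non-pluripolar product. Since the asserted inequality is an inequality of (non-pluripolar) measures and every object involved is local, I would first fix a small coordinate chart on which $\theta=\ddc g$ for a smooth potential $g$, and set $a:=g+\varphi$ and $b:=g+\psi$, which are genuine psh functions there. Then $\theta_\varphi=\ddc a$, $\theta_\psi=\ddc b$ and $\theta_{\max(\varphi,\psi)}=\ddc\max(a,b)$, while $\{\varphi<\psi\}=\{a<b\}$ and $\{\psi\le\varphi\}=\{b\le a\}$, so the statement becomes the local assertion
\begin{equation*}
(\ddc\max(a,b))^n \ \geq\ \mathbf 1_{\{b\le a\}}(\ddc a)^n + \mathbf 1_{\{a<b\}}(\ddc b)^n
\end{equation*}
for the non-pluripolar products.

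Next I would introduce the bounded cut-offs $a_k:=\max(a,-k)$ and $b_k:=\max(b,-k)$, for which $\max(a_k,b_k)=\max(a,b,-k)$. For these bounded psh functions the classical Bedford--Taylor maximum principle \cite{bedford1987fine} gives
\begin{equation*}
(\ddc\max(a_k,b_k))^n \ \geq\ \mathbf 1_{\{b_k\le a_k\}}(\ddc a_k)^n + \mathbf 1_{\{a_k<b_k\}}(\ddc b_k)^n .
\end{equation*}
I would then multiply through by $\mathbf 1_{E_k}$, where $E_k:=\{a>-k\}\cap\{b>-k\}$. On $E_k$ one has $a_k=a$, $b_k=b$ and $\max(a_k,b_k)=\max(a,b)$, so the comparison sets satisfy $\{b_k\le a_k\}\cap E_k=\{b\le a\}\cap E_k$ and $\{a_k<b_k\}\cap E_k=\{a<b\}\cap E_k$, while the plurifine locality property (Lemma~\ref{lem: plurifine}, in its local Bedford--Taylor form, \cite{bedford1987fine}) identifies $\mathbf 1_{E_k}(\ddc a_k)^n$, $\mathbf 1_{E_k}(\ddc b_k)^n$ and $\mathbf 1_{E_k}(\ddc\max(a_k,b_k))^n$ with the corresponding non-pluripolar products $\mathbf 1_{E_k}(\ddc a)^n$, $\mathbf 1_{E_k}(\ddc b)^n$ and $\mathbf 1_{E_k}(\ddc\max(a,b))^n$. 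This turns the previous inequality into
\begin{equation*}
\mathbf 1_{E_k}(\ddc\max(a,b))^n \ \geq\ \mathbf 1_{E_k}\mathbf 1_{\{b\le a\}}(\ddc a)^n + \mathbf 1_{E_k}\mathbf 1_{\{a<b\}}(\ddc b)^n .
\end{equation*}

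Finally I would let $k\to\infty$. The sets $E_k$ increase to $X\setminus(\{a=-\infty\}\cup\{b=-\infty\})$, whose complement is pluripolar; since non-pluripolar products put no mass on pluripolar sets, monotone convergence yields the local inequality, and patching charts gives the global one. The ``in particular'' statement then follows immediately: if $\varphi\le\psi$ then $\max(\varphi,\psi)=\psi$ and $\{\psi\le\varphi\}=\{\varphi=\psi\}$, so restricting the main inequality to the contact set gives $\mathbf 1_{\{\varphi=\psi\}}\theta_\psi^n\ge \mathbf 1_{\{\varphi=\psi\}}\theta_\varphi^n$. The step I expect to require the most care is the passage from the bounded Bedford--Taylor inequality to the non-pluripolar one: one must check that multiplying by $\mathbf 1_{E_k}$ is compatible with the indicator functions $\mathbf 1_{\{b_k\le a_k\}}$ sitting inside the measures, that the cut-off Bedford--Taylor measures genuinely coincide with the non-pluripolar products on $E_k$ (this is exactly where plurifine locality and the defining increasing-limit construction of the product are used), and that no mass escapes to the polar locus in the limit.
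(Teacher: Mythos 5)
The paper states Lemma~\ref{lem: maxprin} without proof (it is quoted as a standard preliminary from the literature), so there is no in-paper argument to compare against; your proposal must be judged on its own. It is correct, and it is the standard way this fact is established: localize so that $\theta_\varphi=\ddc a$, $\theta_\psi=\ddc b$ with $a,b$ psh, apply the classical Bedford--Taylor maximum principle to the bounded cut-offs $\max(a,-k)$, $\max(b,-k)$, observe that by the very definition of the non-pluripolar product (the increasing-limit construction together with plurifine locality) the cut-off Monge--Amp\`ere measures coincide with the non-pluripolar ones on the plurifine open set $E_k=\{a>-k\}\cap\{b>-k\}$, and let $k\to\infty$ using that non-pluripolar products put no mass on the pluripolar set $\{a=-\infty\}\cup\{b=-\infty\}$. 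The only point worth flagging is that the entire content of the contact-set refinement --- obtaining the closed condition $\{\psi\le\varphi\}$ rather than the plurifine-open $\{\psi<\varphi\}$ --- is delegated to the bounded statement you cite; that statement is indeed classical in exactly this asymmetric form, so the delegation is legitimate, and the ``in particular'' clause follows as you say.
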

% \begin{proof} We refer the readers to \cite[Lemma 2.5]{darvas2023relative}.
	%For the reader's convenience, we briefly give proof here. 
%	If $\varphi$ and $\psi$ are locally bounded, the result follows from the maximum principle; see e.g.,~\cite[Theorem 3.23]{guedj2017degenerate}. For general case, set $\varphi^t:=\max(\varphi,V_\theta-t)$, $\psi^t:=\max(\psi,V_\theta-t)$. Then $\varphi^t$ and $\psi^t$ are locally bounded on $\Omega$, it follows that
%	\[\mathbf{1}_{\{\varphi>V_\theta-t\}\cap\{\psi>V_\theta-t \}\cap \{\varphi^t=\psi^t\}}\theta^n_{\varphi^t}\leq \mathbf{1}_{\{\varphi>V_\theta-t\}\cap\{\psi>V_\theta-t \}\cap \{\varphi^t=\psi^t\}}\theta^n_{\psi^t}, \]
	%multiplying with $\mathbf{1}_{\{\f>V_\theta-t\}\cap\{\psi>V_\theta-t \}}$ both side, and 
%	using plurifine locality. Letting $t\to+\infty$, the inequality follows easily.
	%We refer the reader to~\cite{darvas2020metric}.
% \end{proof}

\subsection{Quasi-envelopes}
Given a measurable function $f:X\to\mathbb{R}$, we define the {\em $\theta$-psh envelope} of $h$ by
\begin{equation*}
P_\theta(f):=(\sup\{u\in\PSH(X,\theta): u\leq f\;\text{on}\, X \})^*,
\end{equation*} where the star means we take the upper semi-continuous regularization. %We next define 
%\[ P_{\theta}(g,h):=(\sup\{u\in\PSH(X,\theta): u\leq \min (g,h)\;\text{on}\, X \})^*.\] 

Given a $\theta$-psh function $\phi$, 
Ross and Witt Nystr\"om~\cite{ross2014analytic} introduced the "rooftop envelope" as follows
\[P_\theta[\phi](f)= \left( \lim_{C\to +\infty}P_\theta(\min(\phi+C,f))\right)^*. \]
When $f=0$ we simply write $P_\theta[\phi]$. 
\begin{definition}
A function $\phi\in\PSH(X,\theta)$ is called a {\em model potential} if $\int_X\theta_\phi^n>0$ and $\phi=P_\theta[\phi]$.
\end{definition}
We recall the first technical result about quasi-envelopes. 
\begin{theorem} \label{thm: envelope}
	Assume that $f$ is a quasi-continuous function and $P_\theta(f)\in\PSH(X,\theta)$. Then, $P_\theta(f)\leq f$ outside a pluripolar set, and $\theta_{P_\theta(f)}^n$ is concentrated
	on the contact set $\{P_\theta(f)=f \}$.
\end{theorem}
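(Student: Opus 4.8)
The plan is to separate the statement into a soft pointwise inequality and the genuine content, namely the concentration of the measure. Write $u:=P_\theta(f)$ and set $w:=\sup\{v\in\PSH(X,\theta): v\le f \text{ on } X\}$, so that $u=w^*$. Each competitor satisfies $v\le f$ everywhere, hence $w\le f$ on all of $X$. The set $\{w^*>w\}$, where the upper semicontinuous regularization differs from $w$, is negligible, and negligible sets are pluripolar; therefore $u=w\le f$ outside a pluripolar set. This gives the first assertion and, crucially, that $f-u\ge 0$ quasi-everywhere. Since the non-pluripolar product $\theta_u^n$ charges no pluripolar set, to conclude that $\theta_u^n$ lives on $\{u=f\}$ it suffices to prove $\theta_u^n(\{u<f\})=0$.

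For the concentration I would show that $u$ is maximal on the plurifine-open set $\{u<f\}$, i.e. $\mathbf 1_{\{u<f\}}\theta_u^n=0$, via a local comparison-and-gluing argument after reducing to a continuous obstacle. Fix $\delta>0$; since $\{u<f\}=\bigcup_{\delta>0}\{u<f-\delta\}$, it is enough to prove $\theta_u^n(\{u<f-\delta\})=0$. Suppose not. Using quasi-continuity of $f$ and $u$, I would, given $\eta>0$, find a compact $K\subset\{u<f-\delta\}$ with $\capK_\omega(\{u<f-\delta\}\setminus K)<\eta$ on which both $f$ and $u$ are continuous and the uniform gap $u\le f-\delta$ holds; for small $\eta$ one still has $\theta_u^n(K)>0$. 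On a small ball $B$ meeting $K$ with $\theta_u^n(B)>0$, solve the Bedford--Taylor Dirichlet problem to produce a $\theta$-psh $w$ on $B$ with $\theta_w^n=0$ in $B$ and $w=u$ on $\partial B$; by comparison $w\ge u$, and since $\theta_w^n(B)=0<\theta_u^n(B)$ one has $w>u$ on a set of positive capacity. Choosing $B$ small enough, the continuity of $f$ and the gap $\delta$ force $w\le f$ on $B$, so the glued function $\hat u:=\max(u,w)$ (equal to $w$ on $B$ and to $u$ elsewhere) is a $\theta$-psh minorant of $f$. As $u=P_\theta(f)$ is the largest such minorant we must have $\hat u\le u$ off a pluripolar set, contradicting $\hat u=w>u$ on a set of positive capacity. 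Letting $\eta\to0$ and $\delta\to0$ yields $\theta_u^n(\{u<f\})=0$.

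The main obstacle is making this perturbation rigorous in the present generality, where $f$ is only quasi-continuous and the potentials may be unbounded, so the product must be handled through the plurifine locality property (Lemma~\ref{lem: plurifine}) and Monge--Amp\`ere capacity rather than naive topology. Three points require care: (i) that $\{u<f-\delta\}$ behaves like an open set after discarding a set of small capacity, so that the contact set and the comparison are well posed; (ii) controlling the glued potential from above so that it remains $\le f$, which is exactly where the uniform gap $\delta$ on $K$ and the smallness of $B$ are used; and (iii) ensuring the gluing is compatible with the non-pluripolar product, so that a strict increase on a positively charged set genuinely survives and produces the contradiction.

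As an alternative I would record the orthogonality route: establish the identity $\int_X (f-u)\,\theta_u^n=0$. Since $f-u\ge 0$ quasi-everywhere and $\theta_u^n$ puts no mass on pluripolar sets, this identity forces $\theta_u^n$ to be carried by $\{u=f\}$. The identity follows from the differentiability of the Monge--Amp\`ere energy along the family $P_\theta((1-t)f+tg)$ in the spirit of Berman--Boucksom, which is cleaner to state but imports heavier machinery; I would therefore keep the comparison argument above as the primary line and mention the orthogonality identity only as a cross-check.
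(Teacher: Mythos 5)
The paper does not actually prove this statement: it is quoted from the literature, with a pointer to \cite[Theorem 2.2]{darvas2023relative}, so you are being compared against the Darvas--Di Nezza--Lu argument rather than anything in the text. Your first assertion is fine: $w\le f$ everywhere for the unregularized supremum, $\{w^*>w\}$ is negligible, and negligible sets are pluripolar by Bedford--Taylor; this is exactly the standard argument. The problem is the concentration part, where your local balayage scheme has a genuine gap at its crucial step. You only secure the inequality $w\le f$ (hence $\hat u\le f$) on the compact $K$ where quasi-continuity gives you continuity and the uniform gap $\delta$. But the candidate $\hat u=\max(u,w)$ must satisfy $\hat u\le f$ \emph{everywhere on $B$} to be a competitor for $P_\theta(f)$, and on $B\setminus K$ --- a set of small capacity but in general \emph{not} pluripolar --- you have no control on $f$ at all: $f$ may dip below $w$ there. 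Shrinking $B$ does not help, since every ball meeting $K$ may also meet the bad set; quasi-continuity never gives you a whole ball on which $f$ is continuous. This is precisely why the classical Bedford--Taylor balayage proof works for continuous obstacles but breaks for quasi-continuous ones, and why the literature proof proceeds by a global approximation/capacity scheme instead of local perturbation. Declaring "the main obstacle is making this perturbation rigorous" names the obstacle but does not overcome it; as written, the contradiction evaporates because $\hat u$ need not lie below $f$.

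Two further steps are also unjustified. First, "for small $\eta$ one still has $\theta_u^n(K)>0$" tacitly assumes that sets of small $\capK_\omega$ carry small $\theta_u^n$-mass. That is true for bounded $u$ but false in general: for unbounded potentials (even in $\mathcal{E}(X,\omega)$) the non-pluripolar Monge--Amp\`ere measure can be an essentially arbitrary non-pluripolar measure and may concentrate on sets of arbitrarily small capacity --- indeed the whole point of the present paper's Theorem~\ref{mainthm} is that the interplay between mass and capacity is delicate, with $k^n\capK_\omega$-type corrections. (This particular step could be repaired by applying Lusin's theorem with respect to the Radon measure $\theta_u^n$ rather than with respect to capacity, but that is not what you wrote.) Second, the Dirichlet/gluing step is asserted for boundary data $u$ that is merely plurisubharmonic and possibly unbounded: Bedford--Taylor solvability requires continuous boundary data, and making $\max(u,w)$ plurisubharmonic across $\partial B$ requires boundary behavior ($\limsup$ control of $w$ at $\partial B$) that is not automatic; the standard substitute is the balayage envelope $\sup\{v:\ v\le u \text{ on } X\setminus B\}$, which you would then still need to fit under $f$ --- running into the same obstruction as above. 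Your alternative route via the orthogonality identity $\int_X(f-u)\,\theta_u^n=0$ and differentiability of the Monge--Amp\`ere energy is indeed the direction the actual proofs take, but you leave it unexecuted, and in this generality (big classes, unbounded quasi-continuous $f$) it is exactly where the content of \cite[Theorem 2.2]{darvas2023relative} lies.
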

\begin{proof}
 See \cite[Theorem 2.2]{darvas2023relative}.
\end{proof}

\begin{proposition}\label{prop: homega}
Let $\{\theta\}$ be a big cohomology class and $\phi\in\PSH(X,\theta)$. Let $\varphi\in\mathcal{E}(X,\theta,\phi)$ and $\psi\in \PSH(X,\theta,\phi)$. Then for any $b>0$, $P_\omega(b\varphi-b\psi)\in\mathcal{E}(X,\omega)$.
\end{proposition}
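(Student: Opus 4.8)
The plan is to show that $u:=P_\omega(b\varphi-b\psi)$ lies in $\mathcal{E}(X,\omega)$ by controlling the capacity of its sublevel sets. Since $\omega$ is K\"ahler we have $V_\omega=0$ and $\Vol(\omega)=\int_X\omega^n$, and the inequality $\int_X\omega_u^n\le \Vol(\omega)$ always holds. Hence it suffices to prove the reverse inequality, and by the Guedj--Zeriahi capacity characterisation of the full--mass class this amounts to showing $\lim_{t\to+\infty}t^n\capK_\omega(\{u<-t\})=0$. Throughout I would use that the truncations $P_\omega(\max(b\varphi-b\psi,-s))$ are well defined (the constant $-s$ is a competitor) and decrease to $u$, so that $u$ is a genuine $\omega$-psh function.

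First I would reduce to the case $\psi=\phi$. Since $\psi\le\phi$ we have $b\varphi-b\psi\ge b\varphi-b\phi$ on $X$, and monotonicity of the envelope gives $u\ge P_\omega(b\varphi-b\phi)=:u_0$. Consequently $\{u<-t\}\subseteq\{u_0<-t\}$, so it is enough to prove $\lim_{t\to+\infty}t^n\capK_\omega(\{u_0<-t\})=0$. In other words only the least singular competitor $\phi$ for $\psi$ matters, and the singularities of $\psi$ play no further role.

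The heart of the argument is the capacity decay for $u_0=P_\omega(b(\varphi-\phi))$, and this is where the full relative mass of $\varphi$ enters. By Theorem~\ref{thm: envelope}, $u_0\le b(\varphi-\phi)$ outside a pluripolar set and $\omega_{u_0}^n$ is concentrated on the contact set $\{u_0=b(\varphi-\phi)\}$. I would analyse $u_0$ through its bounded truncations $v_s:=P_\omega(\max(b(\varphi-\phi),-s))$, each of which is bounded, hence of full mass $\int_X\omega_{v_s}^n=\Vol(\omega)$, with $\omega_{v_s}^n$ carried by $\{v_s=\max(b(\varphi-\phi),-s)\}$; as $s\to+\infty$ one has $v_s\downarrow u_0$. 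The mass that escapes in this limit is supported, up to the usual capacity--mass estimates, on the set where the truncation is active, namely $\{b(\varphi-\phi)\le -s\}=\{\varphi\le\phi-s/b\}$. Now the hypothesis $\varphi\in\mathcal{E}(X,\theta,\phi)$ is precisely what makes this set negligible: by the capacity characterisation of the relative full--mass class of Darvas--Di Nezza--Lu one has $\lim_{s\to+\infty}s^n\capK_\omega(\{\varphi<\phi-s\})=0$. Feeding this decay into the Guedj--Zeriahi capacity--mass inequalities shows both that no mass escapes (so $u_0$ itself has full mass) and that $t^n\capK_\omega(\{u_0<-t\})\to0$, which by the first paragraph yields $u\in\mathcal{E}(X,\omega)$.

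The main obstacle is exactly the quantitative comparison between the envelope $u_0$ and the non--$\omega$-psh function $b(\varphi-\phi)$: the easy bound $u_0\le b(\varphi-\phi)$ points the wrong way for estimating the sublevel set $\{u_0<-t\}$ \emph{from above}, so one must understand how much the envelope operator can spread out sublevel sets. Making this precise is where Theorem~\ref{thm: envelope} (localising $\omega_{u_0}^n$ on the contact set) has to be combined with the comparison principle and the capacity decay of $\{\varphi<\phi-s\}$; the remaining steps, namely the monotone convergence of the truncations and the bookkeeping of the escaping mass, are routine.
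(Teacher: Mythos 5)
Your overall architecture — reduce to $\psi=\phi$ by monotonicity of the envelope, then study $u_0=P_\omega(b(\varphi-\phi))$ through its bounded truncations and the concentration of $\omega_{u_0}^n$ on the contact set — is the right one, and it matches the strategy of the proof the paper cites. But the argument has a genuine gap at its core. The external ingredient you invoke, namely that $\varphi\in\mathcal{E}(X,\theta,\phi)$ implies $\lim_{s\to+\infty}s^n\capK_\omega(\{\varphi<\phi-s\})=0$, is not a result of Darvas--Di Nezza--Lu in this form: their characterisations of $\mathcal{E}(X,\theta,\phi)$ are phrased either via singularity types of envelopes ($P_\theta[\varphi]=P_\theta[\phi]$) or via the \emph{relative} capacity built from $\theta$-psh functions lying between $\phi-1$ and $\phi$, not via the K\"ahler capacity $\capK_\omega$; and the two capacities are not comparable with the linear rate needed to preserve an $o(s^{-n})$ decay. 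Worse, the $\capK_\omega$ statement is essentially equivalent to what you are trying to prove: once one knows $h:=P_\omega(\varphi-\phi)\in\mathcal{E}(X,\omega)$, Theorem~\ref{thm: envelope} gives $\varphi\geq\phi+h$ quasi-everywhere, hence everywhere, and the capacity decay follows from the Guedj--Zeriahi criterion for $h$; conversely, this decay is exactly the content of the ``in particular'' part of Theorem~\ref{thm: thm1}, which the paper \emph{deduces from} the present proposition. As written, your proof is circular.

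Even granting that input, you do not close the decisive step, and you say so yourself: the bound $u_0\leq b(\varphi-\phi)$ points the wrong way, and you never produce an upper bound for $\capK_\omega(\{u_0<-t\})$, nor a proof that no mass escapes along $v_s\downarrow u_0$ beyond the assertion that it is ``supported, up to the usual capacity--mass estimates,'' on $\{\varphi\leq\phi-s/b\}$. That is precisely the hard point. The proof the paper relies on handles it by writing $\omega_{v_s}^n$ on the contact set, splitting according to whether $v_s=b(\varphi-\phi)$ or $v_s=-s$, and on the first piece comparing $\omega_{v_s}^n$ with the $(\omega+b\theta)$-Monge--Amp\`ere measure of $v_s+b\phi=b\varphi$ via plurifine locality (Lemma~\ref{lem: plurifine}) and the monotonicity of masses (Theorem~\ref{mainthm}), which is where the hypothesis $\int_X\theta_\varphi^n=\int_X\theta_\phi^n$ actually enters. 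None of these tools appears concretely in your sketch, so the proposal identifies the obstacle but does not overcome it.
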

\begin{proof}
The proof was given in~\cite[Proposition 2.10]{dang2021continuity} in the special case when $\phi=V_\theta=\psi$. The proof is based on Theorem \ref{thm: envelope}, the plurifine locality (Lemma \ref{lem: plurifine}), and the monotonicity of Monge--Amp\`ere masses (Theorem~\ref{mainthm}).
The general case, where $\phi$ is an arbitrarily $\theta$-psh function with positive mass, follows by the same argument.
\end{proof}
As a consequence, we confirm the question posed by Berman--Boucksom--Eyssidieux--Guedj--Zeriahi~\cite[Question 36]{dinew2016open}, which was previously resolved by Darvas--Di Nezza--Lu~\cite{darvas2018singularity}.
\begin{corollary}
Assume that $\{\theta\}$ is a big and nef cohomology class and $\varphi\in\mathcal{E}(X,\theta)$. Then, $\varphi$ has zero Lelong number at all points. 
 % Moreover, $\mathcal{I}(\varphi,x)=\mathcal{I}(V_\theta,x)$.
\end{corollary}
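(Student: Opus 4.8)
The plan is to reduce the statement to the vanishing of the Lelong numbers of $V_\theta$ and of a full-mass function produced by Theorem~\ref{thm: thm1}. Since $\{\theta\}$ is big, Theorem~\ref{thm: thm1} applies with $\phi=V_\theta$; as $\varphi\in\mathcal E(X,\theta)=\mathcal E(X,\theta,V_\theta)$, its last assertion furnishes a constant $c>0$ and a function $h\in\mathcal E(X,\omega)$ with
$$\varphi\ \ge\ V_\theta+ch\qquad\text{on }X.$$
This single inequality is the entire input from the new results of the paper; everything else is a comparison of Lelong numbers.

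Fix $x_0\in X$. The Lelong number is non-increasing for the pointwise order: if $u\ge v$, then every $\gamma$ admissible for $u$ in the defining supremum is also admissible for $v$, so $\nu(u,x_0)\le\nu(v,x_0)$. Moreover it is additive in the sense that $\nu(V_\theta+ch,x_0)=\nu(V_\theta,x_0)+c\,\nu(h,x_0)$, since the Lelong number of a quasi-psh function equals that of the associated closed positive current, smooth forms contribute nothing, and Lelong numbers of the currents $\theta_{V_\theta}$ and $c\,\omega_h$ add (cf.~\cite{demailly1993monge}). Combining these two properties with the displayed inequality gives
$$0\ \le\ \nu(\varphi,x_0)\ \le\ \nu(V_\theta+ch,x_0)\ =\ \nu(V_\theta,x_0)+c\,\nu(h,x_0).$$
It then remains to check that both terms on the right vanish. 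For the second, I would invoke the fact that every member of the full-mass class $\mathcal E(X,\omega)$ has identically zero Lelong numbers (Guedj--Zeriahi~\cite{guedj2007weighted}), so $\nu(h,x_0)=0$. For the first, the nef-ness of $\{\theta\}$ enters, yielding $\nu(V_\theta,x_0)=0$; combining the two gives $\nu(\varphi,x_0)=0$, and since $x_0$ is arbitrary the corollary follows.

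The crux, and the only place nef-ness is used, is the claim $\nu(V_\theta,\cdot)\equiv 0$. Nef-ness gives, for every $\delta>0$, that $\{\theta+\delta\omega\}$ is K\"ahler and hence admits a smooth potential, so that $V_{\theta+\delta\omega}$ is bounded and $\nu(V_{\theta+\delta\omega},x_0)=0$. The delicate point I anticipate is that one cannot simply let $\delta\downarrow 0$: although $V_{\theta+\delta\omega}\downarrow V_\theta$, Lelong numbers are only lower semicontinuous along decreasing limits, so vanishing is not automatically preserved in the limit. The honest way around this is to appeal to Demailly's regularization of nef classes (cf.~\cite{demailly1993monge,boucksom2010monge}), which directly produces closed positive currents in $\{\theta\}$ of minimal singularities with arbitrarily small Lelong numbers, forcing $\nu(V_\theta,x_0)=0$. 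The remaining ingredients — the reduction through Theorem~\ref{thm: thm1} and the monotonicity and additivity of Lelong numbers — are routine.
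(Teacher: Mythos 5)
Your proof is correct and follows essentially the same route as the paper: both reduce to the inequality $\varphi\ge V_\theta+ch$ with $h\in\mathcal{E}(X,\omega)$ (you via the last assertion of Theorem~\ref{thm: thm1}, the paper directly via Proposition~\ref{prop: homega} and Theorem~\ref{thm: envelope} with $h=P_\omega(\varphi-V_\theta)$), then conclude from the vanishing of Lelong numbers of functions in $\mathcal{E}(X,\omega)$ and of $V_\theta$ for a big and nef class. Your extra care about why $\nu(V_\theta,\cdot)\equiv 0$ (the semicontinuity pitfall in letting $\delta\downarrow 0$) is a correct elaboration of what the paper simply cites from Boucksom.
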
 
\begin{proof}
Let $\varphi\in \mathcal{E}(X,\theta)$. 
We apply Proposition~\ref{prop: homega} with $b=1$, $\phi=\psi=V_\theta$ to obtain $h\colonequals P_\omega(\varphi-V_\theta)\in\mathcal{E}(X,\omega)$. By Theorem~\ref{thm: envelope}, we have $V_\theta+h\leq\varphi$ almost everywhere, and hence everywhere on $X$. It follows that $\nu(\varphi,x)=\nu(V_\theta,x)$ for all $x\in X$. From~\cite[Propositions 3.2, 3.6]{boucksom2004divisorial}, we have $\nu(V_\theta,x)=0$ for all $x\in X$, which completes the proof. 
% Since $\varphi\geq V_\theta+h$
% $\varphi\in\mathcal{E}(X,\omega)$, and the conclusion follows immediately from ~\cite[Corollary 1.8]{guedj2007weighted}.
\end{proof}
We also point out a slight generalization of Proposition~\ref{prop: homega}.
\begin{proposition}\label{prop: hOmega}
	\label{prop_imp}
	Let $\{\theta\}$ be a big cohomology class. Let $\varphi,\psi\in\PSH(X,\theta)$ be such that $P_\theta[\varphi]$ is less singular than $\psi$. Then for any $b>0$, $P_\omega(b\varphi-b\psi)\in\mathcal{E}(X,\omega)$.  
\end{proposition}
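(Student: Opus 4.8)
The plan is to reduce the statement to Proposition~\ref{prop: homega} by taking the comparison potential to be the rooftop envelope $P_\theta[\varphi]$ itself. The point is that, although $\varphi$ need not lie in $\mathcal{E}(X,\theta)$, it always has full mass relative to its own model envelope: by the theory of model potentials of Ross--Witt Nyström and Darvas--Di Nezza--Lu one has $\int_X\theta_{P_\theta[\varphi]}^n=\int_X\theta_\varphi^n$. Moreover $P_\theta[\varphi]$ is unchanged if we add a constant to $\varphi$ (the limit over $C\to+\infty$ in its definition absorbs the shift), and $P_\theta[\varphi]\geq\varphi$ as soon as $\varphi\leq 0$, since then $\min(\varphi+C,0)\geq\varphi$. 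Hence, after normalizing $\sup_X\varphi=0$, we obtain $\varphi\leq P_\theta[\varphi]$ together with equality of masses, that is, $\varphi\in\mathcal{E}(X,\theta,P_\theta[\varphi])$.

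With $\phi:=P_\theta[\varphi]$ in hand, I would verify the second hypothesis of Proposition~\ref{prop: homega}, namely $\psi\in\PSH(X,\theta,\phi)$. Since $P_\theta[\varphi]$ is less singular than $\psi$, i.e. $P_\theta[\varphi]\succeq\psi$, there is a constant $C$ with $\psi\leq P_\theta[\varphi]+C$, so that $\psi-C\leq\phi$ and therefore $\psi-C\in\PSH(X,\theta,\phi)$. Applying Proposition~\ref{prop: homega} to $\varphi\in\mathcal{E}(X,\theta,\phi)$ and $\psi-C\in\PSH(X,\theta,\phi)$ then gives, for every $b>0$,
$$P_\omega\bigl(b\varphi-b(\psi-C)\bigr)\in\mathcal{E}(X,\omega).$$

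It remains to undo the two normalizations. Subtracting $\sup_X\varphi$ from $\varphi$ and $C$ from $\psi$ alters the argument $b\varphi-b\psi$ only by an additive constant, and adding a constant $a$ to a function merely translates its $\omega$-envelope, $P_\omega(\,\cdot\,+a)=P_\omega(\,\cdot\,)+a$, while leaving the non-pluripolar mass $\omega_u^n$ invariant. As $\mathcal{E}(X,\omega)$ is consequently stable under the addition of constants, I conclude that $P_\omega(b\varphi-b\psi)\in\mathcal{E}(X,\omega)$, as required.

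The crux of the argument is the first step: recognizing $P_\theta[\varphi]$ as the correct comparison potential and invoking the mass-preservation of the rooftop envelope, $\varphi\in\mathcal{E}(X,\theta,P_\theta[\varphi])$; everything afterwards is bookkeeping of constants and a direct appeal to Proposition~\ref{prop: homega}, which is exactly why this is only a "slight" generalization. I would also note that no positivity assumption on $\int_X\theta_\varphi^n$ is needed: in the degenerate case $\int_X\theta_\varphi^n=0$ both masses vanish, the membership $\varphi\in\mathcal{E}(X,\theta,P_\theta[\varphi])$ is immediate, and Proposition~\ref{prop: homega}, being stated for an arbitrary $\phi$, still applies.
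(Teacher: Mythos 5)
Your argument is correct, and it is genuinely different from what the paper does: the paper's ``proof'' of Proposition~\ref{prop: hOmega} is only a pointer to an external reference (Proposition~2.4 of the cited continuity paper), whereas you give an actual reduction to Proposition~\ref{prop: homega} inside the paper. Your key move --- taking the comparison potential to be $\phi:=P_\theta[\varphi]$, using translation-invariance of the rooftop envelope and $\varphi\le P_\theta[\varphi]$ after normalization, and then invoking the mass-preservation identity $\int_X\theta_{P_\theta[\varphi]}^n=\int_X\theta_\varphi^n$ to get $\varphi\in\mathcal{E}(X,\theta,P_\theta[\varphi])$ --- is sound, and the bookkeeping with constants ($P_\omega(f+a)=P_\omega(f)+a$ and constant-invariance of membership in $\mathcal{E}(X,\omega)$) is fine. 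What your route buys is self-containedness relative to this paper, at the price of importing one nontrivial external fact, namely the Darvas--Di~Nezza--Lu theorem that the rooftop envelope preserves non-pluripolar mass; you should cite that explicitly, and note that its published proof uses Witt-Nystr\"om's monotonicity theorem, which is re-proved here independently (Theorem~\ref{mainthm}), so there is no circularity.

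Two small caveats. First, your closing remark that the degenerate case $\int_X\theta_\varphi^n=0$ is ``immediate'' is a little quick: the membership $\varphi\in\mathcal{E}(X,\theta,P_\theta[\varphi])$ still requires $\int_X\theta_{P_\theta[\varphi]}^n=0$, which again comes from the same mass-preservation theorem (monotonicity alone only gives $\ge$). Second, although Proposition~\ref{prop: homega} is stated for arbitrary $\phi$, the paper's own proof sketch of it explicitly restricts to $\phi$ ``with positive mass''; so in the zero-mass case your reduction rests on a version of Proposition~\ref{prop: homega} that the paper does not actually justify. Neither point affects the main (positive-mass) case, where your proof is complete.
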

\begin{proof}
	We refer the reader to \cite[Proposition 2.4]{dang2023continuity}. 
    % The proof is based on Theorem \ref{thm: envelope}, the plurifine locality (Lemma \ref{lem: plurifine}), and the monotonicity of Monge--Amper\`re masses. 
\end{proof}
% We end this section with the following definition.
\begin{definition}\label{def: sing-capa}
Assume $\varphi,\psi\in\PSH(X,\theta)$. We say that $\varphi$ is {\em less singular in capacity} than $\psi$, denoted by $\varphi\succeq_C \psi$, if there exist $h\in\mathcal{E}(X,\omega)$ and $c\geq 0$ such that
\begin{equation}\label{eq: def-sing-capa} \lim_{k\to+\infty}k^n\capK_{\omega}(\{\varphi<\psi+ch-k\})=0.\end{equation}
We say $\varphi$, $\psi$ have the {\em same singularity
type in capacity} if $\varphi\succeq_C \psi$ and $\psi\succeq_C \varphi$.
\end{definition}
 
\section{Proof of main theorems}
% \subsection{Monotonicity of non-pluripolar products}
% From now on, $\theta$ always denotes a smooth closed (1,1)-form whose cohomology class is pseudoeffective.

First, we prove the following proposition, which states that potentials with the same singularity type also have the same global mass. 
This is a conjecture of Boucksom--Eyssidieux--Guedj-Zeriahi~\cite[page 217]{boucksom2010monge}, which was initially proved by Witt--Nystr\"om~\cite{wittnystrom19-monotonicity}.
% This result generalizes that of Witt--Nystr\"om~\cite{wittnystrom19-monotonicity}, who proved athe conjecture of Boucksom--Eyssidieux--Guedj-Zeriahi~\cite[page 217]{boucksom2010monge}. 

\begin{proposition}\label{prop1}
Let $\varphi,\psi\in\PSH(X,\theta)$. If $\varphi$ and $\psi$ have the same singularity type, then $\int_X\theta_\varphi^n=\int_X\theta_\psi^n$.  
\end{proposition}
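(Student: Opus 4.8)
The quickest route is to obtain this as an immediate consequence of Theorem~\ref{mainthm}, of which it is essentially the case $c=0$. Since $\varphi$ and $\psi$ have the same singularity type, there is a constant $C\geq 0$ with $|\varphi-\psi|\leq C$ on $X$. Applying Theorem~\ref{mainthm} with $\theta_j=\theta$, $\varphi_j=\varphi$, $\psi_j=\psi$ for every $j$, and with $c=0$ and $h=0\in\mathcal{E}(X,\omega)$, we note that for $k>C$ the set $\{\varphi<\psi-k\}$ is empty, since $\varphi\geq\psi-C>\psi-k$; hence $\capK_\omega(\{\varphi<\psi-k\})=0$ for all large $k$ and the hypothesis of the theorem holds trivially. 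This yields $\int_X\theta_\varphi^n\geq\int_X\theta_\psi^n$, and exchanging the roles of $\varphi$ and $\psi$ gives the reverse inequality, hence equality.

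If instead one wants a proof that does not presuppose the full monotonicity theorem---which is the appropriate posture if Proposition~\ref{prop1} is meant to serve as a base case on the way to Theorem~\ref{mainthm}---then this is the argument I would actually write. I would first truncate, setting $\varphi_k:=\max(\varphi,V_\theta-k)$ and $\psi_k:=\max(\psi,V_\theta-k)$. Both have minimal singularities, so their non-pluripolar masses equal the cohomological volume, $\int_X\theta_{\varphi_k}^n=\int_X\theta_{\psi_k}^n=\Vol(\theta)$ for every $k$; this is standard for potentials with minimal singularities and needs no monotonicity. Since the non-pluripolar measure does not charge the pluripolar set $\{\varphi=-\infty\}$, the construction of the non-pluripolar product together with the plurifine locality property (Lemma~\ref{lem: plurifine}) gives
$$\int_X\theta_\varphi^n=\lim_{k\to+\infty}\int_{\{\varphi>V_\theta-k\}}\theta_{\varphi_k}^n=\Vol(\theta)-\lim_{k\to+\infty}\int_{\{\varphi\leq V_\theta-k\}}\theta_{\varphi_k}^n,$$
and likewise for $\psi$. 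Writing $L(\varphi):=\Vol(\theta)-\int_X\theta_\varphi^n$ for the mass that escapes to the polar set, the problem reduces to proving $L(\varphi)=L(\psi)$. (If $\{\theta\}$ is pseudoeffective but not big then $\Vol(\theta)=0$, both masses vanish, and there is nothing to prove; so we may assume $\{\theta\}$ big.)

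It remains to compare the escaping masses. The bound $|\varphi-\psi|\leq C$ forces $\{\varphi\leq V_\theta-k\}$ and $\{\psi\leq V_\theta-k\}$ to agree up to a shift of the parameter by $C$, and on the overlap both $\varphi_k$ and $\psi_k$ are of bounded type with $\|\varphi_k-\psi_k\|_\infty\leq C$. On such a bounded region I would compare $\theta_{\varphi_k}^n$ and $\theta_{\psi_k}^n$ via the usual telescoping identity together with the partial maximum principle of Lemma~\ref{lem: maxprin}, integrating by parts so that the discrepancy becomes a boundary term bounded by $C$ times a Monge--Amp\`ere mass supported in the thin transition shell $\{V_\theta-k-C\leq\min(\varphi,\psi)\leq V_\theta-k\}$. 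The main obstacle---and the only delicate point---is precisely this error term: the total non-pluripolar mass is \emph{not} continuous along decreasing families, so one cannot naively pass to the limit. What rescues the argument is that the transition shell decreases to a pluripolar set as $k\to+\infty$, so its mass tends to $0$; the uniform bound $|\varphi-\psi|\leq C$ is exactly what confines the mismatch between the two truncated integrals to this vanishing shell. Letting $k\to+\infty$ then gives $L(\varphi)=L(\psi)$, that is, $\int_X\theta_\varphi^n=\int_X\theta_\psi^n$.
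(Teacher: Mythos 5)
Your first route---deducing the proposition from Theorem~\ref{mainthm} with $c=0$, $h=0$---is logically circular in the architecture of this paper: Theorem~\ref{mainthm} is proved via Theorem~\ref{thm: sub-main}, whose proof opens by invoking Proposition~\ref{prop1} (both for the reduction to the semi-positive case and through the identity $\int_X\theta_{v_k}^n=\int_X\theta^n$). You half-anticipate this, so everything rests on your second, ``self-contained'' argument.

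That second argument has a genuine gap exactly where the proposition is hard. The reduction to comparing escaping masses, $L(\varphi)=L(\psi)$ with $L(u)=\Vol(\theta)-\int_X\theta_u^n$, is fine: truncations $\max(u,V_\theta-k)$ have minimal singularities, their total masses all equal $\Vol(\theta)$ without any monotonicity input, and the non-pluripolar mass is the increasing limit of the truncated integrals. But the remaining comparison is between $\int_{\{\varphi\le V_\theta-k\}}\theta_{\varphi_k}^n$ and $\int_{\{\psi\le V_\theta-k\}}\theta_{\psi_k}^n$, i.e.\ between two \emph{different} families of measures, and ``the usual telescoping identity together with integration by parts'' is not available off the shelf here: integration by parts for non-pluripolar products of unbounded potentials on a compact manifold is itself a substantial theorem, and extracting an error ``bounded by $C$ times the mass of a transition shell'' is precisely the estimate that every existing proof (Witt-Nystr\"om, Lu--Nguy\^en, Vu, and this paper) has to labor for; asserting it is assuming the conclusion. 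Moreover your justification for the vanishing of that error is wrong as stated: the shells $\{V_\theta-k-C\le\min(\varphi,\psi)\le V_\theta-k\}$ do not decrease in $k$ (they are disjoint moving annuli), and their ``mass'' is taken against the $k$-dependent measures $\theta_{\varphi_k}^n$; a summability argument only gives vanishing along a subsequence for the portion where $\theta_{\varphi_k}^n$ agrees with the fixed measure $\theta_{V_\theta}^n$, and the contact sets $\{\varphi=V_\theta-k\}$, which Lemma~\ref{lem: maxprin} controls only one-sidedly, are not handled at all. The paper sidesteps all of this with a different device: for semi-positive $\theta$ it interpolates, setting $v_k=\max\bigl((1-2\varepsilon)\varphi_k-2\varepsilon k,\;\psi-\varepsilon k\bigr)$, so that plurifine locality (Lemma~\ref{lem: plurifine}) turns the comparison into the single Stokes identity $\int_X\theta_{v_k}^n=\int_X\theta^n$ at the cost of an $O(\varepsilon)$ loss rather than a shell term; the merely big case is then recovered by adding $t\omega$ and identifying coefficients of two homogeneous degree-$n$ polynomials in $t$. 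To complete your route you would have to actually prove the shell estimate, which amounts to re-proving the proposition.
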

We would like to mention that several alternative proofs of this conjecture were later given by Lu--Nguy\^en~\cite{lu2022hessian} using the monotonicity of the energy functional, Lu~\cite{lu2021capacities} using a standard approximation process, and Vu~\cite{vu2021relative} where relative non-pluripolar products of
currents are constructed. We provide below a short alternative proof.
\begin{proof} First we  can assume that the class $\{\theta\}$ is big. Indeed, if this is not the case we can just replace $\theta$ with $\theta+\varepsilon\omega$. Using the multi-linearity
of the non-pluripolar product (\cite[Proposition 1.4]{boucksom2010monge}), we can let $\varepsilon\to 0$ to conclude. 

We may assume that $0\geq \varphi,\psi$. 
We divide the proof into two steps.

\noindent {\bf Step 1:} We assume that $\theta$ is semi-positive.

We argue the same as in the proof of \cite[Theorem 4.1]{Ahag09-MA-pluripolar}. Fix $\varepsilon>0$ sufficiently small.
We set
$$\varphi_k = \max (\varphi, - k - 1 ),\quad u_k = (1 - 2\varepsilon ) \varphi_k - 2\varepsilon k,\; \text{and}\quad v_k = \max ( u_k , \psi - \varepsilon k ).$$
We set $E_k := \{ \varphi < - k \} \cap \{ \varphi > \psi -\varepsilon k \}$. We observe that  $\varphi_k+k<0$ on $E_k$, it follows that
$$(1 - 2\varepsilon ) \varphi_k - 2\varepsilon k=\varphi_k-2\varepsilon(\varphi_k+k)>\psi-\varepsilon k $$ there, so $v_k=(1 - 2\varepsilon ) \varphi_k - 2\varepsilon k$ on $E_k$. By plurifine locality (Lemma \ref{lem: plurifine}), we have
$$\theta_{v_k}^n = (\theta + (1-2\varepsilon )dd^c \varphi_k )^n = ((1 -  2\varepsilon ) \theta_{ \varphi_k } + 2\varepsilon\theta )^n$$
on  $E_k$. It follows that
%$$\int_{ E_k } \theta_{ v_k }^n \geq ( 1 - 2\varepsilon )^n \int_{ E_k }  \theta_{ \varphi_k }^n.$$
%Hence, we have
$$\int_{ \{u_k> \psi - \varepsilon k  \} } \theta_{ v_k }^n  \geq ( 1 - 2\varepsilon )^n \int_{ E_k }  \theta_{ \varphi_k }^n.$$
On the other hand, Stoke's theorem yields
$\int_{ X } \theta_{ v_k }^n = \int_{ X } \theta_{ \varphi_k }^n = \int_{ X } \theta^n.$ Observe that $v_k=\psi-\varepsilon k$ on $\{u_k\leq \psi-\varepsilon k \}$.
Therefore, by plurifine locality, we obtain
\begin{equation}\label{eq: thm31}
	\begin{split}
	(1 - 2\varepsilon )^{ n } \int_{ X\backslash E_k }  \theta_{ \varphi_k } ^n   + O(1) \epsilon \int_{ X } \theta^n  \geq \int_{ \{ u_k \leq  \psi - \varepsilon k \} }  \theta_{ \psi }^n  \geq \int_{ \{ \psi \geq  - \varepsilon k \} }  \theta_{ \psi }^n 
	\end{split}
\end{equation} noticing that $\varphi_k\leq 0$. By plurifine locality again, we have
\[ \int_{ X\backslash E_k }  \theta_{ \varphi_k } ^n \leq \int_{\{\varphi>-k-1\}}\theta_\varphi^n+\int_{\{\varphi\leq \psi-\varepsilon k\}}\theta_{\varphi_k}^n.\]
Plugging this into~\eqref{eq: thm31}, we obtain
\[ \int_{\{\varphi>-k-1\}} \theta_\varphi^n+\int_{\{\varphi\leq \psi -\varepsilon k\}}\theta_{\varphi_k}^n+O(1)\varepsilon\int_X\theta^n\geq \int_{\{\psi\geq -\varepsilon k\}}\theta_\psi^n\]
 Since $\varphi\geq \psi+ O(1)$, letting $k\to+\infty$ and then $\varepsilon\to 0^+$, we  obtain 
$$\int_{ X }  \theta_{ \varphi }^n  \geq \int_{ X } \theta_{ \psi }^n$$ since the measures $\theta_\varphi^n$ and $\theta_\psi^n$ vanish on pluripolar sets.
The desired equality follows by reversing the roles of $\varphi$ and $\psi$.

\smallskip
\noindent {\bf Step 2:} We treat the general case: $\{\theta \} $ is merely big.

%and $$\lim_{ k \to +\infty }k^n \capK_{ \omega } ( \{ | \varphi - \psi | > -ch + k \} ) = 0,$$ with $h\in \mathcal E(X,\omega )$. 

We choose a real number $t_0>0$ so large that $\theta + t_0 \omega \geq 0$. By step 1, we have
$$\int_{ X }  (t\omega + \theta_{ \varphi } )^n  = \int_{ X } (t\omega + \theta_{ \psi } )^n ,$$
for all $t\geq t_0$. 
Since non-pluripolar products are multilinear; see \cite[Proposition 1.4]{boucksom2010monge}), we see that $\int_{ X }  (t\omega + \theta_{ \varphi } )^n  \;\text{and}\; \int_{X}  (t\omega + \theta_{ \psi } )^n $ are both homogeneous polynomials of degree $n$. Therefore, we obtain an equality between two polynomials for all $t\geq 0$. Identifying the constant coefficients of these polynomials yields the desired equality.
 \end{proof}
 Next, we establish a slight generalization of Witt-Nystr\"om's result, which is used to prove Theorem~\ref{mainthm}.
\begin{theorem}\label{thm: sub-main} Let $\varphi,\psi\in\PSH(X,\theta)$. If $\varphi$ and $\psi$ have the same singularity type in capacity, i.e.,
    there exist $0\geq h\in\mathcal{E}(X,\omega)$ and $c\geq 0$ such that 
	\[\lim_{k\to+\infty}k^n\capK_{\omega}(\{|\varphi-\psi|>-ch+k\})=0,  \]
	then $\int_X\theta_\varphi^n=\int_X\theta_\psi^n$.
\end{theorem}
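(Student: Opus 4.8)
The plan is to follow the proof of Proposition~\ref{prop1}, which is the special case $c=0$, $h=0$ of the present statement, and to replace the single step where the hypothesis $\varphi\ge\psi+O(1)$ was used by a quantitative capacity estimate. Since the symmetric hypothesis controls both $\{\varphi<\psi+ch-k\}$ and $\{\psi<\varphi+ch-k\}$, it suffices by symmetry to prove the one-sided inequality $\int_X\theta_\varphi^n\ge\int_X\theta_\psi^n$ assuming only $\lim_{k\to\infty} k^n\capK_\omega(\{\varphi<\psi+ch-k\})=0$; exchanging $\varphi$ and $\psi$ then gives the equality. The capacity hypothesis involves only $\varphi,\psi,h,\omega$, so it is untouched by the reductions of Proposition~\ref{prop1}: replacing $\theta$ by $\theta+\varepsilon\omega$ and letting $\varepsilon\to0$ via multilinearity, together with the polynomial trick of Step~2, reduces matters to the case $0\le\theta\le C\omega$ with $\varphi,\psi\le0$.

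In this semipositive case I would run the construction of Proposition~\ref{prop1} verbatim: with $\varphi_k=\max(\varphi,-k-1)$, $u_k=(1-2\varepsilon)\varphi_k-2\varepsilon k$ and $v_k=\max(u_k,\psi-\varepsilon k)$, plurifine locality and Stokes' theorem lead to exactly the same inequality
\[
\int_{\{\varphi>-k-1\}}\theta_\varphi^n+\int_{\{\varphi\le\psi-\varepsilon k\}}\theta_{\varphi_k}^n+O(1)\varepsilon\int_X\theta^n\ \ge\ \int_{\{\psi\ge-\varepsilon k\}}\theta_\psi^n .
\]
The only point at which the argument diverges from Proposition~\ref{prop1} is that, lacking $\varphi\ge\psi+O(1)$, the ``leftover'' term $\int_{\{\varphi\le\psi-\varepsilon k\}}\theta_{\varphi_k}^n$ is no longer eventually zero. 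Granting that it tends to $0$ as $k\to\infty$ for each fixed $\varepsilon>0$, I would let $k\to\infty$ and then $\varepsilon\to0^+$ to conclude $\int_X\theta_\varphi^n\ge\int_X\theta_\psi^n$, using that $\theta_\varphi^n$ and $\theta_\psi^n$ charge no pluripolar set.

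Controlling the leftover term rests on two estimates. First, since $\varphi_k$ takes values in $[-k-1,0]$ and $\theta\le C\omega$, the function $w:=(\varphi_k+k+1)/(k+1)$ is $\omega$-psh with $0\le w\le1$; writing $(k+1)\omega_w=\theta_{\varphi_k}+\beta$ with $\beta:=(k+1)\omega-\theta\ge0$ a positive smooth form, the binomial expansion of $((k+1)\omega_w)^n=(\theta_{\varphi_k}+\beta)^n$ consists of nonnegative terms, so $\theta_{\varphi_k}^n\le(k+1)^n\omega_w^n$, and by the definition of $\capK_\omega$,
\[
\int_E\theta_{\varphi_k}^n\ \le\ (k+1)^n\,\capK_\omega(E)
\]
for every Borel set $E$ (with an extra fixed factor $C^n$ in the reduced setting). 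Second, I would bound $\capK_\omega(\{\varphi\le\psi-\varepsilon k\})$ by splitting according to the size of $h$: using subadditivity of capacity,
\[
\{\varphi\le\psi-\varepsilon k\}\subseteq\{\varphi<\psi+ch-\tfrac{\varepsilon k}{2}+1\}\cup\{h<-\tfrac{\varepsilon k}{2c}\}
\]
(the second set being empty when $c=0$). On the first set $ch\ge-\varepsilon k/2$ turns $\varphi\le\psi-\varepsilon k$ into $\varphi\le\psi+ch-\varepsilon k/2$, which is controlled by the hypothesis at level $\varepsilon k/2-1$; on the second set I invoke that $h\in\mathcal{E}(X,\omega)$ implies $\lim_{t\to\infty}t^n\capK_\omega(\{h<-t\})=0$, the capacity characterization of the full-mass class (cf.\ \cite{guedj2007weighted}). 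Multiplying by $(k+1)^n$, each piece tends to $0$, whence $(k+1)^n\capK_\omega(\{\varphi\le\psi-\varepsilon k\})\to0$ and the leftover term vanishes in the limit.

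The crux is this last step: reconciling the shift $ch$ in the hypothesis with the truncation level $-\varepsilon k$ of the construction. The decomposition above is precisely what makes the weight $k^n$ of Definition~\ref{def: sing-capa} and the membership $h\in\mathcal{E}(X,\omega)$ cooperate --- the weight $k^n$ is tuned to cancel the factor $(k+1)^n$ coming from the Monge--Amp\`ere mass of the truncation $\varphi_k$, while the full-mass hypothesis on $h$ ensures that the region where $ch$ is very negative is capacity-negligible at the same rate. Everything else is a direct transcription of the proof of Proposition~\ref{prop1}.
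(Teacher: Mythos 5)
Your argument is correct and follows the same skeleton as the paper's proof: the same reduction to semipositive $\theta$ via Proposition~\ref{prop1}, the same truncations $\varphi_k$, $u_k$, $v_k$ leading to the inequality $\int_{\{\varphi>-k-1\}}\theta_\varphi^n+\int_{\{\varphi\le\psi-\varepsilon k\}}\theta_{\varphi_k}^n+O(1)\varepsilon\int_X\theta^n\ge\int_{\{\psi\ge-\varepsilon k\}}\theta_\psi^n$, the same splitting of the leftover set $\{\varphi\le\psi-\varepsilon k\}$ into a piece controlled by the hypothesis and the piece $\{h<-\varepsilon k/(2c)\}$, and the same bound $\int_E\theta_{\varphi_k}^n\le(k+1)^n\capK_\omega(E)$ for the first piece. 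The one point where you genuinely diverge is the second piece: you convert $\int_{\{h<-\varepsilon k/(2c)\}}\theta_{\varphi_k}^n$ into $(k+1)^n\capK_\omega(\{h<-\varepsilon k/(2c)\})$ and invoke the implication $h\in\mathcal{E}(X,\omega)\Rightarrow\lim_{t\to\infty}t^n\capK_\omega(\{h<-t\})=0$. The paper avoids this: it embeds $\{h\le-\tfrac{\varepsilon k}{2c}\}$ into $\{h<-\tfrac{\varepsilon k}{4c}+\tfrac{\varepsilon(k-1)}{4c(k+1)}\varphi_k\}$ and applies the comparison principle for $\mathcal{E}(X,\omega)$ to dominate the integral by $\tfrac{(4c(k+1))^n}{(k-1)^n\varepsilon^n}\int_{\{h<-\varepsilon k/(4c)\}}\omega_h^n$, whose prefactor stays bounded in $k$ for fixed $\varepsilon$ and whose integral tends to $0$ simply because $\omega_h^n$ charges no pluripolar set. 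Both routes work, but note that only the converse of the characterization you use (capacity decay implies full mass) is immediate; the direction you need is a genuine theorem, so if you keep your version you should state it precisely and cite it (it is not literally in \cite{guedj2007weighted} in the form you quote --- see rather the capacity characterizations of $\mathcal{E}$ in Guedj--Zeriahi and Di Nezza--Lu), whereas the paper's variant is self-contained modulo the comparison principle, which fits its stated aim of giving a short alternative proof.
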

\begin{proof} According to Proposition~\ref{prop1}, it suffices to treat the case where $\theta$ is semi-positive.
We borrow the notation in the proof of Proposition~\ref {prop1}. By the same arguments (see~\eqref{eq: thm31}), we have
\begin{equation}\label{eq}
	\begin{split}
	(1 - 2\varepsilon )^{ n } \int_{ X\backslash E_k }  \theta_{ \varphi_k } ^n   + O(1) \epsilon \int_{ X } \theta^n \geq \int_{ \{ \psi \geq  - \varepsilon k \} }  \theta_{ \psi }^n.
	\end{split}
 \end{equation} 
 %noticing that $\varphi_k\leq 0$.
Since $\theta\leq \omega$, we have
\begin{equation*}\label{eq: eq1}
\begin{split}
\int_{ X\backslash E_k } \theta_{ \varphi_k }^n&\leq\int_{ \big\{\varphi > - k - 1 \big\} }  \theta_{ \varphi } ^n + \int_{ \big\{ \varphi \leq \psi - \varepsilon k \big\} } \theta_{ \varphi_k }^n\\
&\leq \int_{X}\theta_{ \varphi }^n  + \int_{ \{ \varphi < \psi + ch - \frac { \varepsilon ( k - 1 ) } { 2 } \} } \theta_{ \varphi_k }^n + \int_{ \big\{ h \leq - \frac { \varepsilon k } { 2c } \big\} } \theta_{ \varphi_k }^n\\
&\leq \int_{ X }  \theta_{ \varphi }^n  +  ( k + 1 )^n \capK_{ \omega } \bigg( \bigg\{ \varphi < \psi + ch - \frac { \varepsilon ( k - 1 ) } { 2 } \bigg\} \bigg) + \int_{ \{ h \leq - \frac { \varepsilon k } { 2c } \} }  \theta_{ \varphi_k }^n.
	\end{split}
\end{equation*}
The last term is dominated by
\begin{equation*}\label{eq: eq2}
\begin{split}
\int_{ \big\{ h\leq - \frac {\varepsilon k } { 2c } \big\} } \theta_{ \varphi_k }^n & \leq \int_{ \big\{ h < - \frac { \varepsilon k } { 4c } + \frac { \varepsilon ( k - 1 ) } { 4c ( k + 1 ) } \varphi_k \big\} } \theta_{ \varphi_k }^n\\
  &\leq \frac { (4c ( k + 1 ) )^n } {  (k - 1 )^n \varepsilon^n } \int_{ \big\{ h < - \frac { \varepsilon k } { 4c } + \frac { \varepsilon ( k - 1 ) } { 4c ( k + 1 ) } \varphi_k\big \} } \bigg( \theta+\ddc {  \frac { \varepsilon ( k - 1 ) } { 4c ( k + 1 ) } \varphi_k }  \bigg)^n\\
&\leq \frac { ( 4c ( k + 1 ) )^n } { ( k - 1 )^n \varepsilon^n } \int_{ \big\{ h < - \frac { \varepsilon k } { 4c } + \frac { \varepsilon ( k - 1 ) } { 4c ( k + 1 ) } \varphi_k \big\} } \bigg( \omega+\ddc {  \frac { \varepsilon ( k - 1 ) } { 4c ( k + 1 ) } \varphi_k }  \bigg)^n\\
&\leq \frac { ( 4c ( k + 1 ) )^n } { ( k - 1 )^n \varepsilon^n } \int_{ \big \{ h < - \frac { \varepsilon k } { 4c } + \frac { \varepsilon ( k - 1 ) } { 4c ( k + 1 ) } \varphi_k \big\} } (\omega + dd^ch )^n\\
&\leq \frac { ( 4c ( k + 1 ) )^n } { ( k - 1 )^n \varepsilon^n } \int_{ \big\{ h < - \frac { \varepsilon k } { 4c } \big\} } (\omega + dd^ch )^n,
\end{split}
\end{equation*} where in the fourth inequality, we applied the comparison principle for the class $\mathcal E(X, \omega)$ (see \cite[Theorem 1.5]{guedj2007weighted} or \cite[Theorem 2.3]{dinew2009uniqueness}).
Combining these inequalities, we obtain
\begin{equation}\label{eq1}
\begin{split}
	\int_{ X\backslash E_k } \theta_{ \varphi_k }^n&\leq \int_{ X }  \theta_{ \varphi }^n  + ( k + 1 )^n \capK_{ \omega } \bigg( \bigg\{ \varphi < \psi + ch - \frac { \varepsilon ( k - 1 ) } { 2 } \bigg\} \bigg) \\
	&\quad +\frac { ( 4c ( k + 1 ) )^n } { ( k - 1 )^n \varepsilon^n } \int_{ \big\{ h < - \frac { \varepsilon k } { 4c } \big\} } (\omega + dd^ch )^n.
\end{split}
\end{equation}
Plugging \eqref{eq1} into \eqref{eq}, we obtain
\begin{align*}
&\int_{ X } \theta_{ \varphi }^n  + 
(k + 1 )^n \capK_{ \omega } \bigg( \bigg\{ \varphi < \psi + ch - \frac { \varepsilon ( k - 1 ) } { 2 } \bigg\} \bigg) 
 \\&\quad  
 +\frac { ( 4c ( k + 1 ) )^n } { ( k - 1 )^n \varepsilon^n } \int_{ \{ h < - \frac { \varepsilon k } { 4c } \} } (\omega + dd^ch )^n   + O(1) \varepsilon \int_{ X } \theta^n\\
&\qquad\geq \int_{ \{ \psi \geq  - \varepsilon k \} } \theta_{ \psi }^n.\end{align*}
% Letting $k\to +\infty$, since the measures $ \theta_{\varphi}^n $, $\theta_{\psi}^n $, and $\omega_h^n$ vanish on pluripolar sets, we obtain
% $$\int_{ X }  \theta_{ \varphi }^n + O(1)\varepsilon \int_{ X } \theta^n\geq \int_{ X } \theta_{ \psi }^n .$$
Since the measures $ \theta_{\varphi}^n $, $\theta_{\psi}^n $, and $\omega_h^n$ vanish on pluripolar sets, letting $k\to +\infty$, and then $\epsilon\to 0^+$ we obtain
$$\int_{ X }  \theta_{ \varphi }^n  \geq \int_{ X } \theta_{ \psi }^n.$$
The desired equality follows by reversing the roles of $\varphi$ and $\psi$. \end{proof}
% Since $\varphi$ and $\psi$ play the same role we get the desired equality.

% \smallskip
% \noindent {\bf Step 2:} We treat the general case: $\{\theta \} $ is merely big.

% %and $$\lim_{ k \to +\infty }k^n \capK_{ \omega } ( \{ | \varphi - \psi | > -ch + k \} ) = 0,$$ with $h\in \mathcal E(X,\omega )$. 

% We choose a real number $t_0>0$ so large that $\theta + t_0 \omega \geq 0$. By step 1, we have
% $$\int_{ X }  (t\omega + \theta_{ \varphi } )^n  = \int_{ X } (t\omega + \theta_{ \psi } )^n ,$$
% for all $t\geq t_0$. 
% Since non-pluripolar products are multilinear; see \cite[Proposition 1.4]{boucksom2010monge}), we see that $$\int_{ X }  (t\omega + \theta_{ \varphi } )^n  \;\text{and}\; \int_{X}  (t\omega + \theta_{ \psi } )^n $$ are both homogeneous polynomials of degree $n$. Therefore, we obtain an equality between two polynomials for all $t\geq 0$. Identifying the constant coefficients of these polynomials yields the desired equality.

\begin{proof}[Proof of Theorem \ref{mainthm}]	
 Arguing the same as in the proof of Proposition~\ref{prop1}, we can assume that the classes $\{\theta_j\}$ are big.

 We first consider the case that $\varphi_j$ have the same singularity type in capacity as $\psi_j$ for $1\leq j\leq n$, i.e.,
$$\lim\limits_{ k \to +\infty }k^n \capK_{ \omega } ( \{ |\varphi_j - \psi_j | > -ch + k \} ) = 0,$$
 for some $c\geq 0$, and $ h\in \mathcal E(X,\omega)$. By Theorem \ref{thm: sub-main}, we have
$$\int_{ X }  \Big(\sum\limits_{ j = 1 }^{ n } t_j \theta_{ j, \varphi_j } \Big)^n  = \int_{ X }  \Big( \sum\limits_{ j = 1 }^{ n } t_j \theta_{ j, \psi_j } \Big)^n ,$$
for all $t_1, \ldots, t_n\geq 0$. Using the 
multilinearity of the non-pluripolar product (see \cite[Proposition 1.4]{boucksom2010monge}), we see that both $\int_{ X }  ( \sum_{ j = 1 }^{ n } t_j \theta_{ \varphi_j } )^n $ and $\int_{ X }  (\sum_{ j = 1 }^{ n } t_j \theta_{ \psi_j } )^n $ are homogeneous polynomials of degree $n$. Therefore, our last identity ensures that all the coefficients of these polynomials have to be equal, which implies
$$\int_X  \theta_{ 1, \varphi_1 }\wedge \ldots \wedge \theta_{ n, \varphi_n }  = \int_X  \theta_{ 1, \psi_1 }\wedge \ldots\wedge \theta_{ n, \psi_n }.$$
We now treat the general case. We set
$$u_{j\ell} = \max ( \varphi_j - \ell, \psi_j ).$$
We see that $\varphi_j$ and $u_{j\ell}$ have the same singularity type in capacity for $j=1,\ldots,n$. The previous case yields
$$\int_X  \theta_{ 1, \varphi_1 }\wedge ...\wedge \theta_{ n, \varphi_n } = \int_X  \theta_{ 1, u_{1\ell} } \wedge ...\wedge \theta_{ n, u_{n\ell} }  .$$
Letting $\ell\to +\infty$, by \cite[Theorem 2.3]{darvas2018monotonicity}
  (see \cite[Proposition 2.7]{Hiep10-CV-capacity} in the local setting), we obtain
\[\int_X  \theta_{1, \varphi_1 }\wedge \ldots  \wedge \theta_{ n, \varphi_n } \geq \int_X \theta_{ 1, \psi_1 }\wedge \ldots\wedge \theta_{ n, \psi_n } .\]\end{proof}

As a direct consequence, we have the following.
\begin{corollary}\label{thm: compare-mass} Let $\{\theta\}$ be a big cohomology class.
Assume $\varphi,\psi\in\PSH(X,\theta)$ such that $\varphi\leq \psi$. If $\int_X\theta_\varphi^n=\int_X\theta_\psi^n$ then $P_\omega(b(\varphi-\psi))\in\mathcal E(X,\omega)$ for all $b>0$.
 
		 Conversely,  if there exists a constant $b>0$ such that $P_\omega(b(\varphi-\psi))\in\mathcal E(X,\omega)$ then $\int_X\theta_\varphi^n=\int_X\theta_\psi^n$.	
\end{corollary}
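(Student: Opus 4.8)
The statement splits into two implications, and for each the plan is to reduce to a result already established above rather than to re-run the capacity estimates.

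\emph{Forward direction.} The idea is to recognise the hypotheses as precisely those of Proposition~\ref{prop: homega} with the auxiliary potential $\phi=\psi$. First I would note that $\varphi\leq\psi$ gives $\varphi\in\PSH(X,\theta,\psi)$, while the mass equality $\int_X\theta_\varphi^n=\int_X\theta_\psi^n$ says exactly that $\varphi$ has full mass relative to $\phi=\psi$, i.e.\ $\varphi\in\mathcal{E}(X,\theta,\psi)$. Since trivially $\psi\in\PSH(X,\theta,\psi)$ and $\{\theta\}$ is big, Proposition~\ref{prop: homega} applies and yields $P_\omega(b\varphi-b\psi)=P_\omega(b(\varphi-\psi))\in\mathcal{E}(X,\omega)$ for every $b>0$. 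Thus this direction requires no new computation, only the correct choice $\phi=\psi$.

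\emph{Converse direction.} Here the plan is to feed the hypothesis into Theorem~\ref{thm: sub-main} by showing that $\varphi$ and $\psi$ have the same singularity type in capacity. Set $h:=P_\omega(b(\varphi-\psi))$, which lies in $\mathcal{E}(X,\omega)$ by assumption; since $b(\varphi-\psi)\leq 0$ one also has $h\leq 0$. The function $b(\varphi-\psi)$ is a difference of quasi-psh functions, hence quasi-continuous, and $h\in\PSH(X,\omega)$, so Theorem~\ref{thm: envelope} applies and gives $h\leq b(\varphi-\psi)$ outside a pluripolar set. Rewriting this as $\psi-\varphi\leq -h/b$ off a pluripolar set, and using $|\varphi-\psi|=\psi-\varphi$, I would conclude that for every $k>0$ the superlevel set
\[\{\,|\varphi-\psi|>-(1/b)\,h+k\,\}\]
is contained in that same pluripolar set. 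Since pluripolar sets have zero Monge--Amp\`ere capacity, $\capK_\omega$ of this set vanishes for all $k>0$, so the limit condition in the hypothesis of Theorem~\ref{thm: sub-main} holds trivially with the choice $c=1/b$ and the model potential $h$. Theorem~\ref{thm: sub-main} then delivers $\int_X\theta_\varphi^n=\int_X\theta_\psi^n$.

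\emph{Main obstacle.} Both directions are essentially bookkeeping once the propositions and theorems above are in place, so I do not expect a genuine difficulty. The one point demanding care is the passage in the converse from membership $P_\omega(b(\varphi-\psi))\in\mathcal{E}(X,\omega)$ to a quantitative capacity decay: one must invoke the pointwise comparison $P_\omega(f)\leq f$ outside a pluripolar set (Theorem~\ref{thm: envelope}), which hinges on the quasi-continuity of $f=b(\varphi-\psi)$ and on the envelope being $\omega$-psh, and then combine it with the vanishing of $\capK_\omega$ on pluripolar sets to satisfy the limit condition of Theorem~\ref{thm: sub-main} for free. Verifying these two facts carefully is the whole substance of the argument.
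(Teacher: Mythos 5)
Your proof is correct and follows essentially the same route as the paper: the forward direction is exactly the paper's reduction to Proposition~\ref{prop: homega} with $\phi=\psi$, and the converse likewise sets $h=P_\omega(b(\varphi-\psi))$ and feeds the envelope inequality into the capacity--monotonicity machinery. The only cosmetic difference is that the paper upgrades $h\leq b(\varphi-\psi)$ to an inequality everywhere on $X$ and invokes Theorem~\ref{mainthm} (getting one inequality, the other being automatic from $\varphi\leq\psi$), whereas you keep the pluripolar exceptional set, note it has zero capacity, and apply Theorem~\ref{thm: sub-main} directly; both are valid.
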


\begin{proof}
Since $\int_X\theta_\varphi^n=\int_X\theta_\psi^n$, it follows that $\varphi\in\mathcal{E}(X,\theta,\psi)$. Consequently, Proposition~\ref{prop: homega} ensures that $P_\omega(b(\varphi-\psi))\in\mathcal{E}(X,\omega)$ for any $b>0$.

In contrast, setting $h=P_\omega(b(\varphi-\psi))$ for some $b>0$, we observe that
$\varphi\geq \psi +b^{-1}h$ on $X$.  Applying Theorem~\ref{mainthm}, we obtain the desired inequality. 
\end{proof}
Applying Theorem~\ref{mainthm}, we provide the proof of Theorem~\ref{thm: thm1}.
\begin{proof}[Proof of Theorem~\ref{thm: thm1}]
Set $b\colonequals 2c^{-1}$. Since $\varphi,\psi\in\mathcal{E}(X,\theta,\phi)$, by Proposition~\ref{prop: homega}, both functions $u_b\colonequals P_\omega(b(\varphi-\psi))$ and $v_b\colonequals P_\omega(b(\psi-\varphi))$ belong to the class $\mathcal{E}(X,\omega)$. We set
$h_b\colonequals\frac{u_b+v_b-C_b}{2}$, where $C_b>0$ is an upper bound for $u_b$ and $v_b$. We observe that
\[ch_b=b^{-1}(u_b+v_b-C_b)\leq \varphi-\psi \] since $v_b-C_b\leq 0$.  Similarly, $ch_b\leq \psi-\varphi$, and the first statement follows.   
%Set $h_b=P_\omega(\min(u_b,v_b))$. We observe that $h_b\in\mathcal{E}(X,\omega)$ as follows from~\cite[Proposition 5.3]{darvas2020metric} (see also~\cite[Corollary 3.22]{lu2022hessian}). This completes the proof.

The second statement follows immediately from Theorem~\ref{mainthm}, while the last one is the special case where $\psi=\phi$.  
\end{proof}

\begin{example}
We provide here an example to show that in Theorem~\ref{thm: thm1}, the bigness of $\theta$ is necessary. Let $X=X_1\times X_2$ where $(X_1,\omega_1)$, $(X_2,\omega_2)$ are two compact K\"ahler manifolds of dimension $p\geq 1$, $q\geq 1$ respectively, with $n=\dim X=p+q$.  
Define a K\"ahler form on $X$ by $\omega=\pi_1^*\omega_1+\pi_2^*\omega_2$ is , where $\pi_j$ is the projection onto the $j$-th factor.
The smooth closed (1,1) form $\theta$ is defined by $\pi_1^*\omega_1$. We easily see that $V_\theta=0$ and $\{\theta\}$ is not big since $\Vol(\theta)=\int_X\theta^n=0$. 
% We have the following identity \[\PSH(X_1,\omega_1)\simeq \PSH(X,\theta)|_{X_1}. \]
For any $\varphi\in\PSH(X_1,\omega_1)$, we can find an extension $\Tilde{\varphi}$  of $\varphi$ which belongs to $\PSH(X,\theta)$. 
We have $\int_X\theta_{\Tilde{\varphi}}^n=0$, hence $\Tilde{\varphi}\in\mathcal{E}(X,\theta)$. If Theorem~\ref{thm: thm1} holds, it follows that $\nu(\Tilde\varphi,x)=\nu(V_\theta,x)=0$ for all $x\in X$. This leads to a contradiction since  $\varphi\in\PSH(X_1,\omega_1)$ can be chosen to have a positive Lelong number at some point.
\end{example}

\begin{corollary}
 Let $\{\eta\}$ and $\{\theta\}$ be big cohomology classes. Let $\phi\in\PSH(X,\theta)$. Then the following are equivalent.
\hfill
\begin{enumerate}
\item $\PSH(X,\eta)\cap\mathcal{E}(X,\theta,\phi)\neq \varnothing$.

\item There exists $u\in\PSH(X,\eta)$, $v\in\mathcal{E}(X,\theta,\phi)$, $c>0$, and $h\in\mathcal{E}(X,\omega)$ such that \[u\geq v+ ch. \]

\item $\mathcal{E}(X,\eta)\cap\PSH(X,\theta,\phi)\subset \mathcal{E}(X,\theta,\phi)$.
\end{enumerate}

\end{corollary}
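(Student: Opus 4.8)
\textbf{The plan is to prove the three-way equivalence by establishing the cycle (1) $\Rightarrow$ (3) $\Rightarrow$ (2) $\Rightarrow$ (1), leveraging the machinery built up in the paper---especially Proposition~\ref{prop: homega}, Proposition~\ref{prop: hOmega}, and Theorem~\ref{mainthm}.} The key observation driving everything is that membership in a relative full-mass class is controlled by a quantitative comparison $u \geq v + ch$ with $h \in \mathcal{E}(X,\omega)$, as made precise in Theorem~\ref{thm: thm1}. So the strategy is to translate each condition into such a comparison and use the monotonicity of Monge--Amp\`ere masses to pass between mass equalities.

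For the implication (1) $\Rightarrow$ (2), suppose $w \in \PSH(X,\eta)\cap\mathcal{E}(X,\theta,\phi)$. Then taking $u = w \in \PSH(X,\eta)$ and $v = w \in \mathcal{E}(X,\theta,\phi)$, the inequality $u \geq v + ch$ holds trivially with any $h \in \mathcal{E}(X,\omega)$ satisfying $h \leq 0$ (e.g.\ a bounded such $h$) and any $c > 0$; so (2) holds. For (2) $\Rightarrow$ (1), given $u \in \PSH(X,\eta)$ with $u \geq v + ch$ for some $v \in \mathcal{E}(X,\theta,\phi)$, I would like to produce an element of $\PSH(X,\eta)\cap\mathcal{E}(X,\theta,\phi)$. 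The natural candidate is to work with $u$ itself, or rather with $\max(u - \sup u, v)$ type constructions, and verify via the second statement of Theorem~\ref{thm: thm1} that the comparison $u \geq v + ch$ forces $u$ into a full-mass class relative to $\max(u,v)$; combined with $v \in \mathcal{E}(X,\theta,\phi)$ and the fact that $u$ can be taken $\theta$-psh (since $\eta,\theta$ are comparable up to scaling $\omega$, or by a suitable normalization) one lands inside the intersection.

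The implication (1) $\Rightarrow$ (3) is where I expect the real content to lie. Suppose $w \in \PSH(X,\eta)\cap\mathcal{E}(X,\theta,\phi)$, and let $g \in \mathcal{E}(X,\eta)\cap\PSH(X,\theta,\phi)$ be arbitrary; the goal is $g \in \mathcal{E}(X,\theta,\phi)$. Since $g \in \mathcal{E}(X,\eta)$ and $w \in \PSH(X,\eta)$, Corollary~\ref{thm: compare-mass} (or Proposition~\ref{prop: homega}, applied to the $\eta$-setting) gives $P_\omega(b(g - w)) \in \mathcal{E}(X,\omega)$ for all $b>0$, hence a comparison $g \geq w + c h$ with $h \in \mathcal{E}(X,\omega)$. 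On the other hand $w \in \mathcal{E}(X,\theta,\phi)$ means $w$ is more singular than $\phi$ with equal mass. The plan is then to invoke the ``if and only if'' characterization at the end of Theorem~\ref{thm: thm1}: since $g$ is more singular than $\phi$ (as $g \in \PSH(X,\theta,\phi)$) and $g \geq w + ch \geq \phi' + c'h'$ for appropriate data coming from $w \in \mathcal{E}(X,\theta,\phi)$, one concludes $g \geq \phi + (\text{something in } \mathcal{E}(X,\omega))$, whence $g \in \mathcal{E}(X,\theta,\phi)$.

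\textbf{The main obstacle} I anticipate is the bookkeeping in (1) $\Rightarrow$ (3): one must chain two separate comparison inequalities---one coming from $g$ versus $w$ in the class $\{\eta\}$, and one coming from $w$ versus $\phi$ in the class $\{\theta\}$---and verify that the resulting lower bound for $g$ in terms of $\phi$ is again of the form $\phi + c h$ with a \emph{single} $h \in \mathcal{E}(X,\omega)$. This requires that $\mathcal{E}(X,\omega)$ be stable under taking (scaled) sums or maxima of its elements, which holds but must be cited carefully, and it requires reconciling the two different cohomology classes $\{\eta\}$ and $\{\theta\}$, using that both are big so that $V_\theta, V_\eta$ have minimal singularities and the envelope estimates of Proposition~\ref{prop: homega} apply in each. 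The delicate point is ensuring the constant $c$ and the function $h$ can be chosen uniformly; once this is arranged, the characterization in Theorem~\ref{thm: thm1} closes the argument.
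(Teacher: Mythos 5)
Your proposal has two genuine gaps. The first is structural: you announce the cycle $(1)\Rightarrow(3)\Rightarrow(2)\Rightarrow(1)$, but what you actually sketch is $(1)\Rightarrow(2)$, $(2)\Rightarrow(1)$ and $(1)\Rightarrow(3)$. No implication \emph{out of} statement (3) is ever addressed, so the three conditions are not shown to be equivalent — (3) could a priori be strictly weaker than (1) and (2). The paper closes the loop by proving $(2)\Rightarrow(3)$ (essentially your $(1)\Rightarrow(3)$ argument, which is sound: apply Proposition~\ref{prop: homega} in the class $\{\eta\}$ to get $g\geq V_\eta+h'\geq u-\sup_X u+h'$, chain with $u\geq v+ch$, and conclude with Theorem~\ref{thm: thm1} and Theorem~\ref{mainthm}; the sum of two elements of $\mathcal{E}(X,\omega)$ is handled by convexity, as you anticipate) and then $(3)\Rightarrow(1)$ by the cheap observation that any $u\in\mathcal{E}(X,\eta)\cap\PSH(X,\theta,\phi)$ is, under (3), automatically an element of $\PSH(X,\eta)\cap\mathcal{E}(X,\theta,\phi)$. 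You should restructure along these lines rather than trying to prove $(2)\Rightarrow(1)$ directly.

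The second gap is that your direct $(2)\Rightarrow(1)$ step is unsound as written. The claim that ``$u$ can be taken $\theta$-psh since $\eta,\theta$ are comparable up to scaling $\omega$'' is false: an $\eta$-psh function need not be $\theta$-psh when $\{\theta\}$ is a different big class, and neither $u$ nor $\max(u-\sup_X u,v)$ — a maximum of an $\eta$-psh and a $\theta$-psh function, which is a priori psh only with respect to a form dominating both $\eta$ and $\theta$ — is a legitimate candidate for membership in $\PSH(X,\eta)\cap\mathcal{E}(X,\theta,\phi)$. Producing a single function that is simultaneously $\eta$-psh and of full mass relative to $\phi$ inside $\PSH(X,\theta)$ is exactly the content of (1), and there is no obvious way to manufacture one from $u$ and $v$; the detour through (3) avoids this entirely. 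A minor remark: in $(1)\Rightarrow(2)$ you may simply take $h=0$, which lies in $\mathcal{E}(X,\omega)$, rather than a bounded negative $h$.
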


\begin{proof}
 We see that $(1)\Rightarrow (2)$ is trivial. Indeed, we can take $u=v\in \PSH(X,\eta)\cap\mathcal{E}(X,\theta,\phi)$ and $h=0$.
% Assume $u\in \PSH(X,\eta)\cap\mathcal{E}(X,\theta,\phi)$. Thanks to Proposition~\ref{prop: homega}, we obtain $h\colonequals P_\omega(c^{-1}(u-\phi))\in\mathcal{E}(X,\omega)$. Hence,
% \[u\geq \phi+ch. \]

For $(2)\Rightarrow (3)$, let $\varphi\in \mathcal{E}(X,\eta)\cap \PSH(X,\theta,\phi)$. Thanks to Proposition \ref{prop: homega}, we have $h'\coloneqq P_\omega(\varphi-V_\eta)\in\mathcal{E}(X,\omega)$. By assumption, there exists $u\in\PSH(X,\eta)$, $v\in\mathcal{E}(X,\theta,\phi)$, $c>0$, and $h\in\mathcal{E}(X,\omega)$ such that \[u\geq v+ ch. \]
 It follows that 
 \[\varphi\geq V_\eta+h'\geq u-\sup_X u+h'\geq v+ch+h'-\sup_X u. \]
 Theorem \ref{thm: thm1} yields $\int_X\theta_\varphi^n=\int_X\theta^n_v=\int_X\theta_\phi^n$, hence $\varphi\in\mathcal{E}(X,\theta,\phi)$.

 For $(3)\Rightarrow (1)$, if $\mathcal{E}(X,\eta)\cap\PSH(X,\theta,\phi)=\varnothing$, we are done. Otherwise, $u\in \mathcal{E}(X,\eta)\cap\PSH(X,\theta,\phi)$, so $u\in\mathcal{E}(X,\theta,\phi)$. 
\end{proof}
\begin{remark}
When $\eta\geq \theta$, we obviously have $\PSH(X,\eta)\cap\mathcal{E}(X,\theta,\phi)=\mathcal{E}(X,\theta,\phi)\neq \varnothing$. 
\end{remark}
% \begin{proof}
% (1) Assume $\varphi\in \mathcal{E}(X,\eta)\cap \PSH(X,\theta,\phi)$. By Proposition \ref{prop: homega}, we have $h:=P_\omega(\varphi-V_\eta)\in\mathcal{E}(X,\omega)$. Since $\eta\geq \theta$, we have $V_\eta\geq V_\theta\geq \phi-\sup_X\phi$.
% It follows that 
% \[\varphi\geq V_\eta+h\geq \phi-\sup_X\phi+h. \]
% The conclusion of (1) follows from Theorem \ref{thm: thm1}.

% (2) Assume $\varphi\in \mathcal{E}(X,\eta,\phi)\cap \PSH(X,\theta)$. By Proposition \ref{prop: homega} again, we obtain $h':=P_\omega(\varphi-\phi)\in\mathcal{E}(X,\omega)$. Hence, $\phi\geq \varphi\geq\phi+h'$, and by Theorem \ref{thm: thm1} again, we conclude the corollary.
% \end{proof}
\bibliographystyle{alpha}
\bibliography{bibfile}

\end{document}